\def\titlename{Graph topology invariant gradient and sampling complexity for decentralized and stochastic optimization}
\crefname{hypothesis}{Hypothesis}{Hypotheses}
\title{\titlename\thanks{Submitted to the editors DATE.
		\funding{The first author is partially supported by the Office of Navel Research grant N00014-20-1-2089 and Army Research Office grant W911NF-18-1-0223. The second author is partially supported by the Office of Navel Research grants N00014-20-1-2089 and N00014-19-1-2295.}}}
\author{Guanghui Lan\thanks{H. Milton Stewart School of Industrial and Systems Engineering, Georgia Institute of Technology, Atlanta, GA
		(\email{george.lan@isye.gatech.edu}
		).}
	\and Yuyuan Ouyang\thanks{School of Mathematical and Statistical Sciences, Clemson University, Clemson, SC
		(\email{yuyuano@clemson.edu}
		).}
	\and Yi Zhou\thanks{IBM Almaden Research Center, San Jose, CA
		(\email{yi.zhou@ibm.com}
		).}}
\def\kp{\tp[k]}
\def\tf{\tilde{f}}
\def\tL{\tilde{L}}
\def\tu{\tilde{u}}
\def\tx{\tilde{x}}
\def\hx{\hat{x}}
\def\hz{\hat{z}}
\def\hw{\hat{w}}
\def\xl{\underline{x}}
\def\xu{\overline{x}}
\def\yu{\overline{y}}
\def\zu{\overline{z}}
\def\wu{\overline{w}}
\def\tp{^{t-1}}
\def\kp{_{k-1}}
\def\kn{_{k+1}}
\newcommand{\tsum}{\textstyle{\sum}}
\begin{document}

\maketitle

\begin{abstract}
	One fundamental problem in constrained decentralized multi-agent optimization is the trade-off between gradient/sampling complexity and communication complexity.
	In this paper we propose new algorithms whose gradient and sampling complexities are graph topology invariant, while their communication complexities remain optimal. Specifically, for convex smooth deterministic problems, we propose a primal dual sliding (PDS) algorithm that is able to compute an $\varepsilon$-solution with $\cO((\tL/\varepsilon)^{1/2})$ gradient complexity and $\cO((\tL/\varepsilon)^{1/2}+\|\cA\|/\varepsilon)$ communication complexity, where $\tL$ is the smoothness parameter of the objective function and $\cA$ is related to either the graph Laplacian or the transpose of the oriented incidence matrix of the communication network. The complexities can be further improved to $\cO((\tL/\mu)^{1/2}\log(1/\varepsilon))$ and $\cO((\tL/\mu)^{1/2}\log(1/\varepsilon) + \|\cA\|/\varepsilon^{1/2})$ respectively with the additional assumption of strong convexity modulus $\mu$. We also propose a stochastic variant, namely the primal dual sliding (SPDS) algorithm for convex smooth problems with stochastic gradients. The SPDS algorithm utilizes the mini-batch technique and enables the agents to perform sampling and communication simultaneously. It computes a stochastic $\varepsilon$-solution with $\cO((\tL/\varepsilon)^{1/2} + (\sigma/\varepsilon)^2)$ sampling complexity, which can be further improved to $\cO((\tL/\mu)^{1/2}\log(1/\varepsilon) + \sigma^2/\varepsilon)$ in the strong convexity case. Here  $\sigma^2$ is the variance of the stochastic gradient. The communication complexities of SPDS remain the same as that of the deterministic case. All the aforementioned gradient and sampling complexities match the lower complexity bounds for centralized convex smooth optimization and are independent of the network structure. 
	To the best of our knowledge, these gradient and sampling complexities have not been obtained before in the literature of decentralized optimization over a constraint feasible set.
\end{abstract}

\begin{keywords}
  Multi-agent optimization, decentralized optimization, saddle point problems, gradient complexity, sampling complexity, communication complexity, gradient sliding
\end{keywords}

\begin{AMS}
  90C25, 90C06, 49M37, 93A14, 90C15
\end{AMS}

\section{Introduction}

The problem of interest in this paper is the following decentralized multi-agent optimization problem in which the agents will collaboratively minimize an overall objective as the sum of all local objective $f_i$'s.:
\begin{equation}
	\label{eq:problem_multi_agent}
	\min_{x\in X} \textstyle\sum_{i=1}^{m}f_i(x).
\end{equation}
Here each $f_i:X^{(i)}\to\R$ is a convex smooth function defined over a closed convex set $X^{(i)}\in\R^{d}$ and $X:=\cap_{i=1}^mX^{(i)}$. Under decentralized settings, each agent is expected to collect data, perform numerical operations using local data, and pass information to the neighboring agents in a communication network; no agent has full knowledge about other agents' local objectives or the communication network. 
This type of decentralized problems has many applications in signal processing, control, robust statistical inference, machine learning among others (see, e.g., \cite{durham2012distributed,jadbabaie2003coordination,rabbat2004distributed, ram2009distributed, nedic2017fast, forero2010consensus}).

In this paper, we assume that the communication network between the agents are defined by a connected undirected graph $\cG = (\cN,\cE)$, where $\cN=\{1,\ldots, m\}$ is the set of indices of agents and $\cE\subseteq \cN\times \cN$ is the set of communication edges between them. Each agent $i\in\cN$ could directly communicate with the agents in its neighborhood 
$	N_i:=\Set{j\in\cN|(i,j)\in\cE}\cup\{i\}$.
For convenience we assume that there exists a loop $(i,i)$ for all agents $i\in\cN$.
In addition, we assume that the local objectives are all large-dimensional functions and the $i$-th agent may only be able to access the information of its local objective $f_i(x)$ through a sampling process of its first-order information, 
e.g., the agent has objective $f_i(x)=\E_{\xi_i}[F_i(x, \xi_i)]$, where the expectation is taken with respect to the random variable $\xi_i$ and the distribution of $\xi_i$ is not known in advance. In order to solve problem \eqref{eq:problem_multi_agent} collaboratively, the agents have to sample the first-order information of their own local objective $f_i$ and also communicate with the neighbors in the communication network in order to reach consensus. Our goal in this paper is to design an algorithm that is efficient, in terms of both the sampling and communication complexity, on solving the multi-agent decentralized problem \eqref{eq:problem_multi_agent}.

\subsection{Problem formulation and assumptions}
The aforementioned multi-agent problem can also be formulated as the following linearly constrained problem:
\begin{equation}
	\label{eq:problem_decen}
	\min_{x\in \cX} f(x):=\tsum_{i=1}^{m}f_i(x^{(i)}) \st \cA x = 0,
\end{equation}
where the overall objective function $f:\cX\to\R$ is defined on the Cartesian product $\cX:=X^{(1)}\times \cdots\times X^{(m)}\subseteq\R^{md}$ and we use the notation $x = (x^{(1)},\ldots, x^{(m)})\in\cX$ to summarize a collection of local solutions $x^{(i)}\in X^{(i)}$. The matrix $\cA$ in the linear constraint $\cA x=0$ is designed to enforce
the conditions $x^{(i)}=x^{(j)}$ for all agents $i$ and $j$ that are connected by a communication edge. One possible choice of $\cA$ is $\cA = \cL\otimes I_d\in\R^{md\times md}$ where $I_d\in\R^{d\times d}$ is an identity matrix and $\cL\in\R^{m\times m}$ is the graph Laplacian matrix whose $(i,j)$-entry is $|N_i|-1$ if $i=j$, $-1$ if $i\not=j \text{ and }(i,j)\in\cE$, and $0$ otherwise. 
Here $|N_i|$ is 
the degree of node $i$.
One other choice of $\cA$ is $\cA = \cB^\top\otimes I_d\in\R^{|\cE|d\times md}$ where $\cB\in\R^{m\times |\cE|}$ is an oriented incidence matrix of graph $\cG$. Here $|\cE|$ is the number of edges in $\cG$.
For both cases, it can be shown that $\cA x=0$ if and only if $x^{(i)}=x^{(j)}$ for all agents $i$ and $j$ that are connected via a communication edge. 
 Note that problem \eqref{eq:problem_decen} is equivalent to the saddle point problem
\begin{equation}
	\label{eq:spp_decen}
	\min_{x\in \cX}\max_{z\in\R^{md}} f(x) + \langle \cA x, z\rangle.
\end{equation}
We assume that there exists a saddle point $(x^*,z^*)$ for problem \eqref{eq:spp_decen}.

In this paper, our goal is to solve the multi-agent problem \eqref{eq:problem_multi_agent} by obtaining an $\varepsilon$-approximate solution $\xu$ to the linearly constrained problem \eqref{eq:problem_decen} such that $f(\xu) - f(x^*)\le\varepsilon$ and $\|\cA\xu\|_2\le\varepsilon$.
We make the following assumptions on the information and sampling process of the local objective functions. For each $i$, we assume that
\begin{equation}
	\label{eq:tf}
	f_i(x^{(i)}) := \tf_i(x^{(i)}) + \mu\nu_i(x^{(i)})
\end{equation}
where $\mu\ge 0$, $\tf_i$ is a convex smooth function, and $\nu_i$ is a strongly convex function with strong convexity modulus $1$ with respect to a norm $\|\cdot\|$. Here the gradient $\nabla\tf_i$ is Lipschitz continuous with constant $\tL$ with respect to the norm $\|\cdot\|$.
%
%
The $i$-th agent can access first-order information of $\tf_i$ through a stochastic oracle which returns an unbiased gradient estimator $G_i(x^{(i)}, \xi^{(i)})$ for any inquiry point $x^{(i)}\in X^{(i)}$, where $\xi^{(i)}$ is a sample of an underlying random vector with unknown distribution. The unbiased gradient estimator satisfies $\E[G_i(x^{(i)},\xi^{(i)})] = \nabla f_i(x^{(i)})$ and
\begin{equation}
	\label{eq:assum_so}
	\E[\|G_i(x^{(i)},\xi^{(i)}) - \nabla f_i(x^{(i)})\|_2^2]\le \sigma^2,\ \forall x^{(i)}\in X^{(i)},
\end{equation}
where $\|\cdot\|_2$ is the Euclidean norm. Note that the above assumption also covers the deterministic case, i.e., when $\sigma=0$.
For simplicity, in this paper we assume that the strong convexity constant $\mu$, the Lipschitz constant $\tL$, and the variance $\sigma^2$ are the same among all local objective functions. With the introduction of $\tf_i$'s in \eqref{eq:tf}, we can study the cases of both strongly convex and general convex objective functions for the multi-agent optimization problem \eqref{eq:problem_multi_agent}. For convenience, we introduce the following notation for describing the overall objective function:
\begin{equation}
	\label{eq:ftf}
	f(x):=\tf(x) + \mu\nu(x)\text{ where }\tf(x):=\tsum_{i=1}^{m}\tf_i(x^{(i)})\text{ and }\nu(x):=\tsum_{i=1}^{m}\nu_i(x^{(i)}).
\end{equation}

\subsection{Related works and contributions of this paper}
In many real-world applications, the topology of the communication network may constantly change, due to possible connectivity issues especially for agents that are Internet-of-Things devices like cellphones or car sensors. 
Therefore, it is important to design decentralized algorithms with sampling or computation complexity independent of the graph topology \cite{nedic2018network}. 
Ideally, our goal is to develop decentralized methods whose sampling (resp. gradient computation) complexity bounds are graph invariant and in the same order as those of centralized methods for solving stochastic (resp. deterministic) problems.
Although there are fruitful research results in which the sampling or computational complexities and communication complexities are separated for decentralized (stochastic) optimization, for instance, (stochastic) (sub)gradient based algorithms \cite{tsitsiklis1986distributed,nedic2009distributed,duchi2011dual, tsianos2012consensus, nedic2014distributed, hong2017stochastic}, dual type or ADMM based decentralized methods \cite{terelius2011decentralized, boyd2011distributed,shi2014linear, aybat2017distributed, wei20131, chang2014multi}, communication efficient methods \cite{lan2020communication, chen2018lag,koloskova2019decentralized,liu2019communication}, and second-order methods \cite{mokhtari2016dqm, mokhtari2016decentralized}, 
none of the existing decentralized methods can achieve the aforementioned goal for problem \eqref{eq:problem_multi_agent}. To the best of our knowledge, only some very recently developed decentralized algorithms could match the complexities of those of centralized algorithms for the case when $X=\R^d$ \cite{li2020decentralized,kovalev2020optimal,li2020optimal,ye2020pmgt,alghunaim2020decentralized}. However, none of them would apply when we consider a general constraint feasible set $X$ in problem \eqref{eq:problem_multi_agent}.
In this paper, we pursue the goal of developing graph topology invariant decentralized optimization algorithms based on the gradient sliding methods~\cite{lan2016gradient,lan2016accelerated}. Our contributions in this paper can be summarized as follows.

First, for the general convex deterministic problem \eqref{eq:problem_multi_agent}, we propose a novel decentralized algorithm, namely the primal dual sliding (PDS) algorithm, that is able to compute an $\varepsilon$-solution with $\cO(1)((\tL/\varepsilon)^{1/2})$ gradient complexity. This complexity is invariant with respect to the topology of the communication network. To the best of our knowledge, this is the first decentralized algorithm for problem \eqref{eq:problem_multi_agent} that achieves the same order of gradient complexity bound as those for centralized methods. Such gradient complexity can be improved to $\cO(1)((\tL/\mu)^{1/2}\log(1/\varepsilon))$ for the strongly convex case of problem \eqref{eq:problem_multi_agent}.

Second, for the general convex stochastic problem \eqref{eq:problem_multi_agent} in which the gradients of the objective functions can only be estimated through stochastic first-order oracle, we propose a stochastic primal dual sliding (SPDS) algorithm that is able to compute an $\varepsilon$-solution with $\cO(1)((\tL/\varepsilon)^{1/2}+(\sigma/\varepsilon)^2)$ sampling complexity. The established complexity is also invariant with respect to the topology of the communication network. Such result can be improved to $\cO(1)((\tL/\mu)^{1/2}\log(1/\varepsilon) + \sigma^2/\varepsilon)$ for the strongly convex case.

Third, as a byproduct, we show that a simple extension of the PDS algorithm can be applied to solve certain convex-concave saddle point problems with bilinear coupling. For general convex smooth case, the number of gradient evaluations of $\nabla \tf$ and matrix operations (involving $\cA$ and $\cA^\top$) are bounded by $\cO(1)((\tL/\varepsilon)^{1/2})$ and $\cO(1)((\tL/\varepsilon)^{1/2}+\|\cA\|/\varepsilon)$ respectively. As a special case, our proposed algorithm is also able to solve convex smooth optimization problems with linear constraints. This is the first time such complexity results are achieved for linearly constrained convex smooth optimization. We also extend our results to strongly convex problems.

\subsection{Organization of the paper}
This paper is organized as follows. In section \ref{sec:PDS} we present the PDS algorithm for decentralized optimization. In section \ref{sec:SPDS} we present the SPDS algorithm for problems that require sampling of stochastic gradients. The byproduct results on general bilinearly coupled saddle point problems are presented in Section \ref{sec:SPP}. To facilitate reading, we postpone all the major proofs to Section \ref{sec:analysis}. In Section \ref{sec:numerical} we present some preliminary numerical experiment results. Finally the concluding remarks are presented in Section \ref{sec:conclusion}.

\section{The primal dual sliding algorithm for decentralized optimization}
\label{sec:PDS}

In this section, we propose a primal dual sliding (PDS) algorithm for solving the linearly constrained formulation \eqref{eq:problem_decen} for decentralized multi-agent optimization in which each agent $i$ has access to the deterministic first-order information of its local objective $f_i$ (i.e., $\sigma=0$ in the assumption \eqref{eq:assum_so}). 
Inspired by the saddle point formulation \eqref{eq:spp_decen} and the decoupling of the strongly convex term in \eqref{eq:tf},
we propose to study the following saddle point problem:
\begin{equation}
	\label{eq:SPP}
	\min_{x\in \cX}\max_{y\in \R^{md}, z\in \R^{md}} \mu\nu(x) + \langle x, y+\cA^\top z\rangle - \tf^*(y).
\end{equation}
Here 
$\tf^*$ is the convex conjugate
of the function $\tf$ defined in \eqref{eq:ftf}. 
Specifically, $\tf^*$
can be described as the sum of the convex conjugates $\tf_i^*$ of functions $\tf_i$, i.e.,
$
	\tf^*(y) = \tsum_{i=1}^{m}\tf^*_i(y^{(i)}),\text{ where }y:=(y^{(1)}, \ldots, y^{(m)})\in\R^{md}\text{ and }\  y^{(1)},\ldots, y^{(m)}\in\R^d.
$
Our proposed PDS algorithm is a primal-dual algorithm that constantly maintains and updates primal variables $x$ and dual variables $y$ and $z$.
Most importantly, its algorithmic scheme allows the skipping of computations of gradients $\nabla \tf$ from time to time, to which we refer as a ``sliding'' feature. 
We describe the proposed PDS algorithm in Algorithms \ref{alg:PDS_agent} and \ref{alg:PDS}, where Algorithm \ref{alg:PDS_agent} describes the implementation from agent $i$'s perspective and Algorithm \ref{alg:PDS} focuses on implementation over the whole communication network. Here in Algorithm \ref{alg:PDS_agent} we assume that $\cA = \cL\otimes I_d$ is defined by the graph Laplacian $\cL$ of the communication network; similar implementation can be described if $\cA = \cB^\top\otimes I_d$ is defined by an oriented incidence matrix $\cB$.

\capstartfalse
\begin{figure}
\begin{minipage}{.95\linewidth}
\begin{algorithm}[H]
	\caption{\label{alg:PDS_agent}The PDS algorithm for solving \eqref{eq:SPP}, from agent $i$'s perspective}
	\footnotesize
	\begin{algorithmic}
		\State Choose $x_0^{(i)}, \xl_0^{(i)}\in X^{(i)}$, and set $\hx_0^{(i)} = x_{-1}^{(i)} = x_0^{(i)}$, $y_0^{(i)}=\nabla \tf(\xl_0^{(i)})$, and $z_0^{(i)}=0$.
		.
		\For {$k=1,\ldots,N$}
		\State 
		Compute
		\begin{align}
			\label{eq:txk_agent}
			\tx_k^{(i)} = & x\kp^{(i)} + \lambda_k(\hx\kp^{(i)} - x_{k-2}^{(i)})
			\\
			\label{eq:xlk_agent}
			\xl_k^{(i)} = & (\tx_k^{(i)} + \tau_k\xl\kp^{(i)})/(1+\tau_k)
			\\
			\label{eq:yk_agent}
			y_k^{(i)} = & \nabla \tf_i(\xl_k^{(i)})
		\end{align}
		and set $x_k^{0,(i)} = x\kp^{(i)}$, $z_k^{0,(i)} = z\kp^{(i)}$, and $x_k^{-1,(i)} = x\kp^{T\kp-1,(i)}$ (set $x_1^{-1,(i)}=x_0^{(i)}$).
		\For {$t=1,\ldots,T_k$}
		\State 
		Compute
		\begin{align}
			\label{eq:tukt_agent}
			\tu_k^{t,(i)} = & x_k^{t-1,(i)} + \alpha_k^t(x_k^{t-1,(i)} - x_k^{t-2,(i)})
			\\
			\label{eq:zkt_agent}
			z_k^{t,(i)} =& \argmin_{z^{(i)}\in \R^d}\left\langle -\sum_{j\in N_i}\cL^{(i,j)}\tu_k^{t,(j)}, z^{(i)}\right\rangle + \frac{q_k^t}{2}\|z^{(i)} - z_k^{t-1,(i)}\|_2^2
			\\
			\label{eq:xkt_agent}
			x_k^{t,(i)} = & \argmin_{x^{(i)}\in X^{(i)}} \mu\nu_i(x) + \left\langle y_k^{(i)} + \sum_{j\in N_i}\cL^{(i,j)} z_k^{t,(j)}, x^{(i)}\right\rangle 
			\\
			&\qquad\qquad + \eta_k^tV_i(x_{k}^{t-1,(i)}, x^{(i)}) + p_kV_i(x\kp^{(i)}, x^{(i)})
		\end{align} 
		\EndFor
		\State 
		Set $x_k^{(i)} = x_k^{T_k,(i)}$, $z_k^{(i)} = z_k^{T_k,(i)}$, $\hx_k^{(i)} = \sum_{t=1}^{T_k}x_k^{t,(i)}/T_k$, and $\hz_k^{(i)} = \sum_{t=1}^{T_k}z_k^{t,(i)}/T_k$.
		\EndFor 
		\State Output $\xu_N^{(i)}:=\left(\sum_{k=1}^{N}\beta_k\right)^{-1}\sum_{k=1}^{N}\beta_k\hx_k^{(i)}$.		
	\end{algorithmic}
\end{algorithm}
\begin{algorithm}[H]
	\caption{\label{alg:PDS}The PDS algorithm for solving \eqref{eq:SPP}, whole network perspective}
	\footnotesize
	\begin{algorithmic}
		\State Choose $x_0, \xl_0\in X^{(i)}$, and set $\hx_0 = x_{-1} = x_0$, $y_0=\nabla \tf(\xl_0)$, and $z_0=0$.
		\For {$k=1,\ldots,N$}
		\State Compute
		\begin{align}
		\label{eq:txk}
			\tx_k = & x\kp + \lambda_k(\hx\kp - x_{k-2})
			\\
			\label{eq:yk}
			y_k = & \argmin_{y\in \R^{md}}\langle -\tx_k, y\rangle + \tf^*(y) + \tau_kW(y\kp, y).
		\end{align}
		\State Set $x_k^0 = x\kp$, $z_k^0 = z\kp$, and $x_k^{-1} = x\kp^{T\kp-1}$ (when $k=1$, set $x_1^{-1}=x_0$).
		\For {$t=1,\ldots,T_k$}
		\begin{align}
		\label{eq:tukt}
		\tu_k^t = & x_k\tp + \alpha_k^t(x_k\tp - x_k^{t-2})
		\\
		\label{eq:zkt}
		z_k^t =& \argmin_{z\in \R^{md}}\langle -\cA\tu_k^t, z\rangle + \frac{q_k^t}{2}\|z - z_k^{t-1}\|_2^2
		\\
		\label{eq:xkt}
		x_k^t = & \argmin_{x\in \cX} \mu\nu(x) + \langle y_k + \cA^\top z_k^t, x\rangle + \eta_k^tV(x_{k}^{t-1}, x) + p_kV(x\kp, x)
		\end{align} 
		\EndFor
		\State Set $x_k = x_k^{T_k}$, $z_k = z_k^{T_k}$, $\hx_k =\sum_{t=1}^{T_k}x_k^t/T_k$, and $\hz_k = \sum_{t=1}^{T_k}z_k^t/T_k$.
		\EndFor 
		\State Output $\xu_N:=\left(\sum_{k=1}^{N}\beta_k\right)^{-1}\sum_{k=1}^{N}\beta_k\hx_k$.
	\end{algorithmic}
\end{algorithm}
\end{minipage}
\end{figure}
\capstarttrue

A few remarks are in place for Algorithms \ref{alg:PDS_agent} and \ref{alg:PDS}. First, in Algorithm \ref{alg:PDS_agent}, $V_i$ and $W_i$ are prox-functions utilized by the $i$-th agent. Specifically, they are defined based on strongly convex functions $\nu$ and $\tf_i^*$ respectively:
\begin{align}
	\label{eq:Vi}
	V_i(\hat x^{(i)}, x^{(i)}) := & \nu(x^{(i)}) - \nu(\hat x^{(i)}) - \langle \nu'(\hat x^{(i)}), x^{(i)} - \hat x^{(i)}\rangle,\ \forall \hat x^{(i)}, x^{(i)}\in X^{(i)},
	\\
\label{eq:Wi}
W_i(\hat y^{(i)}, y^{(i)}):=&\tf_i^*(y^{(i)}) - \tf_i^*(\hat y^{(i)}) - \langle (\tf^*_i)'(\hat y^{(i)}), y^{(i)}- \hat y^{(i)}\rangle,\ \forall \hat y^{(i)}, y^{(i)}\in \R^{d},
\end{align}
where $(\tf^*_i)'$ denotes the subgradient of $\tf^*_i$. We will also use the notation
\begin{align}
	\label{eq:VW}
	V(\hat x, x):=\tsum_{i=1}^{m}V_i(\hat x^{(i)}, x^{(i)})\text{ and }W(\hat y, y):=\tsum_{i=1}^{m}W_i(\hat y^{(i)}, y^{(i)})
\end{align}
in the description of Algorithm \ref{alg:PDS}. 
Second, the major difference between Algorithms \ref{alg:PDS_agent} and \ref{alg:PDS} is the update step for $y_k$. However, it can be shown that the two update steps for $y_k$, i.e., \eqref{eq:yk_agent} and \eqref{eq:yk}, are equivalent and hence Algorithms \ref{alg:PDS_agent} and \ref{alg:PDS} are equivalent. Indeed, in view of the definition of $W$ in \eqref{eq:Wi} and \eqref{eq:VW} and the optimality condition of \eqref{eq:yk_SPP}, we have
$
	-\tx_k + (1+\tau_k)(\tf^*)'(y_k) - \tau_k(\tf^*)'(y\kp) = 0
$
for certain subgradients $(\tf^*)'(y_k)$ and $(\tf^*)'(y\kp)$ of $\tf^*$. Consequently, if $\xl\kp = (\tf^*)'(y\kp)$ or equivalently $y\kp = \nabla \tf(\xl\kp)$, and $\xl_k$ is defined recursively as in \eqref{eq:xlk_agent}, then we have $y_k = \nabla \tf(\xl_k)$. 
Noting in the description of Algorithms \ref{alg:PDS_agent} and \ref{alg:PDS} that $y_0=\nabla \tf(\xl_0)$, by induction we have that $y_k = \nabla \tf(\xl_k)$ for all $k$. The converse of the above derivation is also true, hence we can conclude that the descriptions of $y_k$ in \eqref{eq:yk_agent} and \eqref{eq:yk} are equivalent. It is then immediate to observe that Algorithms \ref{alg:PDS_agent} and \ref{alg:PDS} are equivalent descriptions of the proposed PDS algorithm. 
Third, observe that there are two loops, namely the inner and outer loops, in the PDS algorithm. 
On one hand, communications are performed only in the inner loops (for $t=1,\ldots,T_k$) and hence the PDS algorithm has in total $2\sum_{k=1}^{N}{T_k}$ rounds of communications. 
On the other hand, gradient computations are only performed in the outer loop (for $k=1,\ldots,N$) and therefore the PDS algorithm has in total $N$ gradient evaluations. 
Here, note that gradient computations and communications are not performed in a one-to-one fashion; in fact the PDS algorithm skips the computations of gradients $\nabla \tf$ frequently. 
The skipping of the gradient computations is the ``sliding'' feature of PDS that allows the gradient complexity to be independent of the graph topology. 
Finally, observe that our output is a weighted average of $\hx_k$ from $k=1$ to $N$, in which $\hx_k$ is the average of inner loop computation results $\{x_k^t\}_{t=1}^{T_k}$. Note that although we are using equal weights, $\hx_k$ may also be chosen as a weighted average of $x_k^t$'s. The analysis and complexity results will remain the same.

Our main convergence result for the PDS algorithm is described in Proposition \ref{pro:Qest} below. We also provide example parameter choices of Algorithm \ref{alg:PDS_agent} in Theorem \ref{thm:spp} that satisfy the conditions in Proposition \ref{pro:Qest}. To facilitate reading, we delay the proof of Proposition \ref{pro:Qest} to Section \ref{sec:analysis}.

\begin{pro}
	\label{pro:Qest}
	Suppose that the parameters of Algorithm \ref{alg:PDS_agent} satisfy the following conditions:
	\begin{itemize}
		\item For any $k\ge 2$, 
		\begin{align}
			\label{eq:cond_Qest_k}
			\begin{aligned}
			& \beta_k\tau_k\le \beta\kp(\tau\kp+1),\ \beta\kp = \beta_k\lambda_k,\ \tL\lambda_k\le p\kp\tau_k,
			\\
			& \beta_k T\kp\alpha_k^1= \beta\kp T_k,\ \alpha_k^1 \|\cA\|^2\le \eta\kp^{T\kp}q_k^{1},\ \beta_k T\kp q_k^1\le \beta\kp T_k q\kp^{T\kp},
			\\
			& \beta_k T\kp (\eta_k^1 + p_k T_k)\le \beta\kp T_k(\mu+\eta\kp^{T\kp}+p\kp);
		\end{aligned}
		\end{align}
		\item For any $t\ge 2$ and $k\ge 1$, 
		\begin{align}
			\label{eq:cond_Qest_t}
			\begin{aligned}
				& \alpha_k^t=1,\ \|\cA\|^2\le\eta_k\tp q_k^t, \ q_k^t \le q_k\tp,\ \eta_k^t\le \mu+\eta_k\tp+p_k;
			\end{aligned}
		\end{align}
		\item In the first and last outer iterations, 
		\begin{align}
			\label{eq:cond_Qest_N}
			\begin{aligned}
				& \tau_1 = 0,\ p_N(\tau_N+1)\ge \tL, \text{ and }
				\eta_N^{T_N}q_{N}^{T_N}\ge \|\cA\|^2.
			\end{aligned}
		\end{align}
	\end{itemize}
	Then we have
	\begin{align}
	& \begin{aligned}
		 f(\xu_N) - f(x^*)
		\le & \left(\tsum_{k=1}^{N}\beta_k\right)^{-1}
		\beta_1\left(\tfrac{\eta_1^1}{T_1} + p_1\right)V(x_0, x^*)
		\text{ and }
	\end{aligned}
\\
		& \begin{aligned}
			 \|\cA \xu_N\|_2 \le & \left(\tsum_{k=1}^{N}\beta_k\right)^{-1}
			 \beta_1
			 \left[
			 \tfrac{q_1^1}{2T_1}(\|z^*\|_2+1)^2 
			+ \left(\tfrac{\eta_1^1}{T_1} + p_1\right)V(x_0, x^*)\right].
		\end{aligned}
	\end{align}	
\end{pro}


\begin{thm}
	\label{thm:spp}
	Denote $\tau:=\sqrt{2\tL/\mu}$ and $\Delta:=\lceil 2\tau + 1\rceil$ if $\mu>0$, and $\Delta:=+\infty$ if $\mu=0$. Suppose that the parameters in Algorithm \ref{alg:PDS_agent} are set to the following: for all $k\le \Delta$,
	\begin{align}
		\label{eq:par_beforeDelta}
		\begin{aligned}
		\tau_k = \tfrac{k-1}{2},\ \lambda_k = \tfrac{k-1}{k},\ \beta_k = k,\ p_k = \tfrac{2\tL}{k},\ T_k = \left\lceil \tfrac{kR\|\cA\|}{\tL}\right\rceil,
		\end{aligned}
	\end{align}
	for all $k> \Delta$,
	\begin{align}
		\label{eq:par_afterDelta}
		\begin{aligned}
		\tau_k = \tau,\ \lambda_k = \lambda:=\tfrac{\tau}{1+\tau},\ \beta_k = \Delta\lambda^{-(k-\Delta)},\ p_k = \tfrac{\tL}{1+\tau},\ T_k = \left\lceil \tfrac{2(1+\tau)R\|\cA\|}{\tL\lambda^{\frac{k-\Delta}{2}}}\right\rceil,
		\end{aligned}
	\end{align}
	and for all $k$ and $t$,
	\begin{align}
		\label{eq:par_allk}
			& \eta_k^t = (p_k+\mu)(t-1) + p_kT_k,q_k^t = \tfrac{\tL T_k}{2 \beta_kR^2}, \alpha_k^t = \begin{cases}
	\tfrac{\beta\kp T_k}{\beta_kT\kp},&\text{ if }k\ge 2\ \text{ and } t=1;
	\\
	1, & \text{otherwise}.
\end{cases}
	\end{align}
	Applying Algorithm \ref{alg:PDS_agent} to solve problem \eqref{eq:problem_decen} we have
	\begin{align}
		\label{eq:fest_T}
		& f(\xu_N) - f(x^*) \le \min\left\{\tfrac{2}{N^2}, \lambda^{N-\Delta}\right\}\cdot 4\tL V(x_0,x^*)
		\\
		\label{eq:Axest_T}
		& \|\cA\xu_N\|_2 \le \min\left\{\tfrac{2}{N^2}, \lambda^{N-\Delta}\right\}\left[\tfrac{\tL}{4 R^2}(\|z^*\|_2+1)^2 + 4\tL V(x_0, x^*)\right].
	\end{align}
	Specially, 
	if we set
	\begin{align}
		\label{eq:R}
		R = \tfrac{\|z^*\|_2 +1}{4\sqrt{V(x_0,x^*)}},
	\end{align}
	then 
	we have the following gradient and communication complexity results for Algorithm \ref{alg:PDS_agent} (here $\cO(1)$ is a constant that is independent of $N$, $\|\cA\|$ and $\epsilon$):
	\begin{enumerate}[label=\emph{\alph*})]
		\item 
		\label{itm:result_nsc}
		If $\mu=0$, i.e., problem \eqref{eq:problem_decen} is smooth and convex, then 
		we can obtain an $\varepsilon$-solution after at most $N:=\left\lceil 4\sqrt{\tL V(x_0,x^*)/\varepsilon}\right\rceil$ gradient evaluations and $2N+96\|\cA\|/\varepsilon$ communications.
		\item 
		\label{itm:result_sc}
		If $\mu>0$, i.e., problem \eqref{eq:problem_decen} is smooth and strongly convex, then 
		we can obtain an $\varepsilon$-solution after at most $N:=
		\cO(1)\left(1 + \sqrt{\tL/\mu}\log(\tL V(x_0,x^*)/\varepsilon)\right)$ gradient evaluations and $2N + \cO(1)\|\cA\|(1+1/\sqrt{\varepsilon})$ communications.
	\end{enumerate}
\end{thm}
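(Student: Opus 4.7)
The plan is to verify that the parameters prescribed in \eqref{eq:par_beforeDelta}--\eqref{eq:par_allk} satisfy the three groups of conditions of Proposition \ref{pro:Qest}, then substitute into its bounds to obtain \eqref{eq:fest_T} and \eqref{eq:Axest_T}, and finally optimize $N$ and sum the $T_k$'s to obtain the complexity claims in parts \ref{itm:result_nsc} and \ref{itm:result_sc}. The inner-loop conditions \eqref{eq:cond_Qest_t} are essentially immediate: with $\alpha_k^t=1$ for $t\ge 2$, the affine form $\eta_k^t = (p_k+\mu)(t-1)+p_k T_k$ in $t$, and $q_k^t$ constant in $t$, all four requirements collapse to the single inequality $\|\cA\|^2 \le \eta_k^1 q_k^1$, which follows from $\eta_k^1 = p_k T_k$ combined with the lower bound $T_k \ge kR\|\cA\|/\tL$.

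For the outer-loop conditions \eqref{eq:cond_Qest_k} I would handle the polynomial regime $k\le\Delta$, the geometric regime $k>\Delta$, and the junction $k=\Delta$ separately. In the polynomial regime the identities $\beta_k\tau_k = \beta_{k-1}(\tau_{k-1}+1)=k(k-1)/2$ and $\beta_k\lambda_k = \beta_{k-1} = k-1$ hold exactly, $p_{k-1}\tau_k = \tL \ge \tL\lambda_k$, and the remaining inequalities follow by direct substitution of $p_k=2\tL/k$ and the closed forms in \eqref{eq:par_allk}. In the geometric regime $\tau_k$, $\lambda_k$, $p_k$ are constant while $\beta_k$ and $T_k$ scale as $\lambda^{-(k-\Delta)}$ and $\lambda^{-(k-\Delta)/2}$ respectively, so each inequality reduces to an algebraic identity using $\tau = \sqrt{2\tL/\mu}$ and $\lambda = \tau/(1+\tau)$. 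At the interface, the matching condition $\beta_{\Delta+1}\tau_{\Delta+1}\le\beta_\Delta(\tau_\Delta+1)$ rearranges to $1+\tau\le(\Delta+1)/2$, which is exactly ensured by $\Delta = \lceil 2\tau+1\rceil$; the terminal condition \eqref{eq:cond_Qest_N} is verified analogously from $p_N(\tau_N+1)\ge\tL$ and the lower bound on $T_N$.

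With all conditions verified, Proposition \ref{pro:Qest} applies. Plugging in $\beta_1=1$, $p_1=2\tL$, $\eta_1^1=2\tL T_1$, and $q_1^1 = \tL T_1/(2R^2)$ gives $\beta_1(\eta_1^1/T_1+p_1)=4\tL$ and $\beta_1 q_1^1/(2T_1)=\tL/(4R^2)$. A lower bound on $\sum_{k=1}^N\beta_k$ comes from $\sum_{k=1}^N k\ge N^2/2$ in the polynomial part and from the geometric sum $\sum_{k=\Delta}^N\Delta\lambda^{-(k-\Delta)}\ge\lambda^{-(N-\Delta)}$ in the geometric part, yielding $1/\sum\beta_k\le\min\{2/N^2,\lambda^{N-\Delta}\}$ and hence \eqref{eq:fest_T}; inserting \eqref{eq:R} into the dual estimate balances the two terms and produces \eqref{eq:Axest_T}. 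The complexity counts then follow by requiring each right-hand side to be $\le\varepsilon$: for $\mu=0$ this forces $N=\lceil 4\sqrt{\tL V(x_0,x^*)/\varepsilon}\rceil$ and
\[
\sum_{k=1}^N T_k \le N + \frac{R\|\cA\|}{\tL}\cdot\frac{N(N+1)}{2} = N + \cO(\|\cA\|/\varepsilon),
\]
while for $\mu>0$ the inequality $\lambda^{N-\Delta}\le\varepsilon/(\tL V(x_0,x^*))$ yields $N=\cO(1+\sqrt{\tL/\mu}\log(\tL V(x_0,x^*)/\varepsilon))$, and the geometric sum of $T_k$'s is dominated by its largest term $T_N=\cO(\|\cA\|/\sqrt\varepsilon)$. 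The main obstacle throughout is the junction $k=\Delta$, where the polynomial and geometric parameter families must mesh so that every inequality in \eqref{eq:cond_Qest_k} remains valid across the interface; this is exactly what forces the threshold $\Delta = \lceil 2\tau+1\rceil$.
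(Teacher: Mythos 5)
Your proposal is correct and follows essentially the same route as the paper's proof: verify the conditions of Proposition \ref{pro:Qest} separately in the polynomial regime, the geometric regime, and at the interface (your matching condition $\beta_{\Delta+1}\tau_{\Delta+1}\le\beta_\Delta(\tau_\Delta+1)$, i.e.\ $\Delta\ge 2\tau+1$, is exactly the paper's junction case $k=\Delta+1$), lower bound $\sum_k\beta_k$ by $N^2/2$ or $\lambda^{-(N-\Delta)}$, plug in $\beta_1(\eta_1^1/T_1+p_1)=4\tL$ and $\beta_1q_1^1/(2T_1)=\tL/(4R^2)$, and then choose $N$ and sum the $T_k$'s. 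The only cosmetic imprecision is in the inner-loop check: for $k>\Delta$ the inequality $\|\cA\|^2\le\eta_k^1q_k^1$ rests on the geometric formula for $T_k$ together with $\Delta\le 2\tau+2$ (and several junction inequalities also use this upper bound on $\Delta$), not on $T_k\ge kR\|\cA\|/\tL$, which is how the paper argues as well.
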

\begin{proof}
	In view of the selection of $\eta_k^t$ and $q_k^t$, we can observe that
	\begin{align}
		\eta_k\tp q_k^t\ge \tfrac{\tL p_k}{2 \beta_k}\left(\tfrac{T_k}{R}\right)^2 = \begin{cases}
			\|\cA\|^2\left(\frac{\tL T_k}{kR\|\cA\|}\right)^2, & k\le \Delta;
			\\
			\tfrac{(2\tau+2)\|\cA\|^2}{\Delta}\left(\frac{\tL\lambda^{\frac{k-\Delta}{2}}T_k}{2(1+\tau)R\|\cA\|}\right)^2, & k> \Delta,
		\end{cases}
	\end{align}
	and together with the definitions of $\Delta$ and $T_k$ we have $\eta_k\tp q_k^t \ge \|\cA\|^2$ for all $k\ge 1$ and $t\ge 2$. Using this observation, it is easy to verify that \eqref{eq:cond_Qest_t} holds. Noting that the above observation also implies that $\eta_N^{T_N}q_N^{T_N}\ge \|\cA\|^2$, and also noting that $\tau_1=0$ and that $p_N(\tau_N+1) =\tL({N+1})/{N}$ when $N\le \Delta$ and $p_N(\tau_N+1) = \tL$ when $ N> \Delta$,
	we have \eqref{eq:cond_Qest_N} holds. It remains to verify \eqref{eq:cond_Qest_k}.
    Note that for all $k$ we have $\beta_k T\kp \alpha_k^1 = \beta\kp T_k$ and $\beta_k T\kp q_k^1 = \tL T_kT\kp /(2R^2) = \beta\kp T_k q\kp^{T\kp}$.
	We consider three cases: $2\le k\le \Delta$,  $k= \Delta+1$, and $k> \Delta+1$. 
	When $2\le k\le\Delta$, we have
	\begin{align}
		& \beta\kp(\tau\kp+1) = {(k-1)k}/{2} = \beta_k\tau_k,
		\\
		& \beta\kp = k-1 = \beta_k\lambda_k, 
		\\
		& \tL\lambda_k = {\tL (k-1)}/{k} < \tL = p\kp\tau_k,
		\\
		& \tfrac{\eta\kp^{T\kp}q_k^{1}}{\alpha_k^1\|\cA\|^2} \ge \tfrac{\eta\kp^{1}q_k^{1}}{\alpha_k^1\|\cA\|^2} \ge 
		\left(\tfrac{\tL T\kp}{(k-1)R\|\cA\|}\right)^2\ge 1,
		\\
		& \beta_kT\kp(\eta_k^1 + p_kT_k) - \beta\kp T_k(\mu+\eta\kp^{T\kp}+p\kp) = - (k-1)\mu T_k T\kp \le 0.
	\end{align}
	When $k=\Delta+1$, from the definition of $\Delta$ we have $2\tau+1\le \Delta\le 2\tau+2$. Therefore
	\begin{align}
		& \beta_k\tau_k - \beta\kp(\tau\kp+1) = \Delta\lambda^{-1}\tau - {\Delta(\Delta+1)}/{2} = (\Delta/2)(2(1+\tau) - (\Delta + 1)) \le 0,
		\\
		& \beta\kp - \beta_k\lambda_k = \Delta - \Delta\lambda^{-1}\lambda = 0, 
		\\
		& \tL\lambda_k - p\kp\tau_k = \tL\lambda - (2\tL/\Delta)\tau \le \tL\lambda - (\tL/(1+\tau))\tau = 0,
		\\
		& \tfrac{\eta\kp^{T\kp}q_k^{1}}{\alpha_k^1\|\cA\|^2} \ge \tfrac{\eta\kp^{1}q_k^{1}}{\alpha_k^1\|\cA\|^2}
		\ge \left(\tfrac{\tL T_\Delta}{\Delta R\|\cA\|}\right)^2\ge 1,
		\\
		& \begin{aligned}
			\beta_k T\kp(\eta_k^1 + p_kT_k) - \beta\kp T_k(\mu+\eta\kp^{T\kp}+p\kp) 
			=&\mu\left[\Delta (\tau-1) - 2\tau^2\right] T_kT\kp\le 0.
		\end{aligned}
	\end{align}
	When $k> \Delta+1$, we have
	\begin{align}
		& \beta\kp(\tau\kp+1) = \Delta\lambda^{-(k-1-\Delta)}(\tau+1) = \Delta\lambda^{-(k-\Delta)}\tau = \beta_k\tau_k,
		\\
		& \beta\kp = \Delta\lambda^{-(k-1-\Delta)} = \beta_k\lambda_k, 
		\\
		& \tL\lambda_k = \tL\tau/(1+\tau) = p\kp\tau_k,
		\\
		& \tfrac{\eta\kp^{T\kp}q_k^{1}}{\alpha_k^1\|\cA\|^2} \ge \tfrac{\eta\kp^{1}q_k^{1}}{\alpha_k^1\|\cA\|^2} 
	    \ge \tfrac{2\tau+2}{\Delta}\cdot\left(\tfrac{\tL \lambda^{(k-1-\Delta)/2} T\kp}{2(1+\tau)R\|\cA\|}\right)^2\ge 1,
		\\
		& \begin{aligned}
			\tfrac{\beta_k T\kp(\eta_k^1 + p_kT_k)}{\beta\kp T_k(\mu+\eta\kp^{T\kp}+p\kp)} 
			= & \tfrac{\tau^2(1+\tau)}{\tau(\tau^2+1+\tau)}\le 1.
		\end{aligned}
	\end{align}
	We now can conclude that \eqref{eq:cond_Qest_k} holds. 
	Note that the summation $\sum_{k=1}^N\beta_k$ can be lower bounded as follows: when $N\le \Delta$, we have $\sum_{k=1}^N\beta_k = N(N+1)/2> N^2/2$; when $N>\Delta$, we have $\sum_{k=1}^N\beta_k>\beta_N \ge \lambda^{-(N-\Delta)}$.
	

	Applying Proposition \ref{pro:Qest} and using the above note on the summation $\sum_{k=1}^N\beta_k$ and the parameter selection in $\eqref{eq:par_beforeDelta}$, we obtain results \eqref{eq:fest_T} and \eqref{eq:Axest_T}. Moreover, setting $R$ to \eqref{eq:R} we conclude that
	\begin{align}
		\max\{f(\xu_N) - f(x^*), \|\cA\xu_N\|_2\}\le \min\left\{\tfrac{2}{N^2}, \lambda^{N-\Delta}\right\}8\tL V(x_0, x^*).
	\end{align}
	We conclude immediately that $\xu_N$ will be an $\varepsilon$-solution if 
	$
		\label{eq:Nest}
		N =\left\lceil 4\sqrt{{\tL V(x_0,x^*)}/{\varepsilon}}\right\rceil
	$ 
	or 
	$
	N=\left\lceil \Delta + \log_{\lambda^{-1}}({8\tL V(x_0,x^*)}/{\varepsilon})\right\rceil.
	$
	As a consequence, when the problem is smooth and convex, i.e., $\mu=0$, by setting $N=\left\lceil 4\sqrt{\tL V(x_0,x^*)/\varepsilon}\right\rceil$ we can obtain an $\varepsilon$-solution $\xu_N$. Using the definitions of $T_k$ and $R$ in \eqref{eq:par_beforeDelta} and \eqref{eq:R} respectively, the total number of communication rounds required by the PDS algorithm can be bounded by 
	$
		2\sum_{k=1}^{N}T_k \le 2N + N(N+1)R\|\cA\|/\tL \le 2N + 96\|\cA\|/\varepsilon.
	$
	When the problem is smooth and strongly convex, i.e., $\mu>0$, we may set $N:=\left\lceil \Delta + \log_{\lambda^{-1}}({8\tL V(x_0,x^*)}/{\varepsilon})\right\rceil$ to obtain an $\varepsilon$-solution $\xu_N$. In view of the fact that $-\log\lambda = \log(1+1/\tau) \ge 1/(1+\tau)$, the total number of gradient computations is bounded by
	\begin{align}
		N\le & 1 + \Delta + \frac{1}{-\log\lambda}\log\tfrac{8\tL V(x_0,x^*)}{\varepsilon} \le 1 + \Delta + (\tau+1)\left(\log\tfrac{8\tL V(x_0,x^*)}{\varepsilon}\right)
		\\
		= & \cO(1)\left(1 + \sqrt{\tL/\mu}\log(\tL V(x_0,x^*)/\varepsilon)\right).
	\end{align}
	Also, observing that
	$
		\lambda^{-N} \le {8\lambda^{-\Delta-1}\tL V(x_0,x^*)}/{\varepsilon},
	$
	using the definitions of $\lambda$, $T_k$ and $R$ in \eqref{eq:par_beforeDelta}, \eqref{eq:par_afterDelta} and \eqref{eq:R}, the total number of communication rounds required by the PDS algorithm is bounded by
	\begin{align}
		2\tsum_{k=1}^{N}T_k \le & 2\tsum_{k=1}^{\Delta}\left(\tfrac{kR\|\cA\|}{\tL}+1\right) + 2\tsum_{k=\Delta+1}^{N}\left(\tfrac{2(1+\tau)\|\cA\|R}{\tL\lambda^{\tfrac{k-\Delta}{2}}}+1\right) 
		\\
		= & 2N + \tfrac{\Delta(\Delta+1)R\|\cA\|}{\tL} + \tfrac{4(1+\tau)R\lambda^{\frac{\Delta}{2}}}{\tL(1 - \lambda^{\frac{1}{2}})}\cdot\|\cA\|(\lambda^{-\frac{N}{2}} - \lambda^{-\frac{\Delta}{2}})
		\\
		\le & 2N + \cO(1)\|\cA\|+\cO(1)\|\cA\|/\sqrt{\varepsilon}.
	\end{align}

\end{proof}

\vgap 

In view of the above theorem, we can conclude that the number of gradient evaluations required by the PDS algorithm to obtain an $\varepsilon$-solution to the decentralized optimization problem \eqref{eq:problem_decen} does not depend on the communication network topology; it is only dependent on the Lipschitz constant $\tL$,  strong convexity parameter $\mu$, and proximity to an optimal solution $V(x_0,x^*)$. 
Moreover, the number of gradient evaluations required by the PDS algorithm is in the same order of magnitude as those optimal centralized algorithms (see, e.g.,  \cite{nesterov2004introductory}). To the best of our knowledge, this is the first time such complexity bounds for gradient evaluations are established for solving decentralized optimization problems over general constraint sets.
Note that the communications complexity for computing an $\varepsilon$-solution still depends on the graph topology of the communication network.

\section{The stochastic primal-dual sliding algorithm for stochastic decentralized optimization}
\label{sec:SPDS}

In this section, we follow the assumption of the unbiased gradient estimators stated in \eqref{eq:assum_so} and propose a stochastic primal-dual sliding (SPDS) algorithm for solving the decentralized multi-agent optimization problem \eqref{eq:problem_decen} under the stochastic first-order oracle. 
Our proposed SPDS algorithm is a stochastic primal-dual algorithm that 
possesses a similar ``sliding'' scheme as the PDS algorithm. In particular, SPDS integrates the mini-batch technique and uses increasing batch size, while skipping stochastic gradient computation from time to time. We describe the SPDS algorithm in Algorithms \ref{alg:SPDS_agent} and \ref{alg:SPDS}.

\capstartfalse
\begin{figure}[h]
	\begin{minipage}{.95\linewidth}
		\begin{algorithm}[H]
			\caption{\label{alg:SPDS_agent}The SPDS algorithm for solving \eqref{eq:SPP}, from agent $i$'s perspective}
			\footnotesize
			\begin{algorithmic}
				\State Modify \eqref{eq:yk_agent} and \eqref{eq:xkt_agent} in Algorithm \ref{alg:PDS_agent} to
				\begin{align}
					\label{eq:vki}
					v_k^{(i)} = & \tfrac{1}{c_k}\tsum_{j=1}^{c_k}G_i(\xl_k^{(i)}, \xi_k^{(i),j})\text{ and }
					\\
					\label{eq:xkt_agent_S}
					x_k^{t,(i)} = & \argmin_{x^{(i)}\in X^{(i)}} \mu\nu_i(x) + \left\langle v_k^{(i)} + \tsum_{j\in N_i}\cL^{(i,j)} z_k^{t,(j)}, x^{(i)}\right\rangle 
				\\
				&\qquad\qquad + \eta_k^tV_i(x_{k}^{t-1,(i)}, x^{(i)}) + \tfrac{p_k}{2}\|x\kp^{(i)} - x^{(i)}\|_2^2
				\end{align}
			\end{algorithmic}
		\end{algorithm}
	
		\vspace{-.8cm}
		
		\begin{algorithm}[H]
			\caption{\label{alg:SPDS}The SPDS algorithm for solving \eqref{eq:SPP}, whole network perspective}
			\footnotesize
			\begin{algorithmic}
				\State Modify \eqref{eq:xkt} in Algorithm \ref{alg:PDS} to
				\begin{align}
					\label{eq:xkt_S}
					x_k^t = & \argmin_{x\in \cX} \mu\nu(x) + \langle v_k + \cA^\top z_k^t, x\rangle + \eta_k^tV(x_{k}^{t-1}, x) + \tfrac{p_k}{2}\|x\kp - x\|_2^2,
			\end{align} 
			in which $v_k$ satisfies $\E[v_k] = y_k$ and 
			\begin{align}
				\label{eq:sigmak_S}
				\E[\|v_k - y_k\|_2^2] \le \sigma_k^2,\text{ where }\sigma_k:=\tfrac{\sigma}{\sqrt{c_k}}
			\end{align}
			\end{algorithmic}
		\end{algorithm}
	\end{minipage}
\end{figure}
\capstarttrue

The difference between the SPDS and PDS algorithms is that the SPDS algorithm uses a mini-batch of $c_k$ stochastic samples of gradients while PDS requires the access of exact gradients, i.e., $y_k=\nabla \tf(\xl_k)$. 
As a result, the update of $y_k$ is replaced by a mini-batch update step and equations \eqref{eq:xkt_agent} and \eqref{eq:xkt} of the PDS algorithm are replaced by their stochastic version accordingly. 
In order to use the mini-batch technique, we need to replace the prox-function $V(\cdot,\cdot)$ in the PDS algorithm in Algorithm \ref{alg:PDS} by Euclidean distances. 
Recalling the assumption on the variance of stochastic samples in \eqref{eq:assum_so} and noting that the batch size at the $k$-th outer iteration is $c_k$, it is easy to obtain that the variance of the gradient estimator $v_k$ is $\sigma_k^2 := \sigma^2/c_k$, as stated in \eqref{eq:sigmak_S}. 
It should also be noted that in many applications the $i$-th agent may be able to collect samples $\{\xi_k^{(i),j}\}_{j=1}^{c_k}$ of the underlying random variable in \eqref{eq:vki} for computing the gradient estimator $v_k^{(i)}$. For example, in online supervised machine learning the $i$-th agent can receive samples $\xi_k^{(i),j}$'s from online data streams of the training dataset. In such case, the SPDS algorithm allows all agents to collect stochastic samples and perform communications simultaneously. This is because that as soon as the mini-batch computation of the gradient estimator $v_k^{(i)}$ in \eqref{eq:vki} is finished in the $k$-th outer iteration, the $i$-th agent can start collecting future stochastic samples $\{\xi\kn^{(i),j}\}_{j=1}^{c\kn}$ for the next outer iteration. Consequently, the collection of future samples and the communication with neighboring agents in the current $k$-th outer iteration can be performed simultaneously.

The convergence of the SPDS algorithm is described in the following proposition,
whose proof is presented in Section \ref{sec:analysis}. Followed by the proposition we also describe example parameter choices in Theorem \ref{thm:spp_S}.

\begin{pro}
	\label{pro:Qest_S}
	Suppose that the parameters of Algorithm \ref{alg:SPDS_agent} satisfy conditions described in \eqref{eq:cond_Qest_k}--\eqref{eq:cond_Qest_N}, among which two conditions are modified to
	\begin{align}
		\label{eq:cond_modified_S}
		2\tL\lambda_k\le p\kp\tau_k,\ \forall k\ge 2,\text{ and }p_N(\tau_N+1)\ge 2\tL,
	\end{align}
	in which the Lipschitz constant $\tL$ is with respect to the Euclidean norm. We have
	\begin{align}
		\label{eq:fest_S}
		& \begin{aligned}
			& \E\left[f(\xu_N) - f(x^*)\right]\le  \left(\tsum_{k=1}^{N}\beta_k\right)^{-1}\left[\tfrac{\beta_1}{2}\left(\tfrac{\eta_1^1}{T_1} + p_1\right)\|x_0 - x^*\|_2^2	+ \tsum_{k=1}^{N}\tfrac{\beta_k \sigma^2}{p_kc_k}\right]\text{ and }
		\end{aligned}
	\\
		\label{eq:Axbest_S}
		& \begin{aligned}
			\E\left[\|\cA \xu_N\|_2\right] \le & \left(\tsum_{k=1}^{N}\beta_k\right)^{-1}\left[\tfrac{\beta_1q_1^1}{2T_1}(\|z^*\|_2+1)^2 \right.
			\\
			& \qquad\qquad\qquad\left.+ \tfrac{\beta_1}{2}\left(\tfrac{\eta_1^1}{T_1} + p_1\right)\|x_0 - x^*\|_2^2	+ \tsum_{k=1}^{N}\frac{\beta_k \sigma^2}{p_kc_k}\right].
		\end{aligned}
	\end{align}	
\end{pro}




\begin{thm}
	\label{thm:spp_S}
		Denote $\tau:=2\sqrt{\tL/\mu}$ and $\Delta:=\lceil 2\tau + 1\rceil$ if $\mu>0$, and $\Delta:=+\infty$ if $\mu=0$. Suppose that the maximum number of outer iterations $N$ is specified, and that the parameters in Algorithm \ref{alg:SPDS_agent} are set to the following: for all $k\le \Delta$,
	\begin{align}
		\label{eq:par_beforeDelta_S}
		\begin{aligned}
			\tau_k = \tfrac{k-1}{2},\ \lambda_k = \tfrac{k-1}{k},\ \beta_k = k,\ p_k = \tfrac{4\tL}{k}, T_k = \left\lceil \tfrac{kR\|\cA\|}{\tL}\right\rceil, c_k = \left\lceil \tfrac{\min\{N,\Delta\} \beta_kc}{p_k\tL}\right\rceil,
		\end{aligned}
	\end{align}
	for all $k\ge \Delta+1$,
	\begin{align}
		\label{eq:par_afterDelta_S}
		\begin{aligned}
			& \tau_k = \tau,\ \lambda_k = \lambda:=\tfrac{\tau}{1+\tau},\ \beta_k = \Delta\lambda^{-(k-\Delta)},\ p_k = \tfrac{2\tL}{1+\tau},\ 
			\\
			& T_k = \left\lceil \tfrac{2(1+\tau)R\|\cA\|}{\tL\lambda^{\tfrac{k-\Delta}{2}}}\right\rceil, c_k = \left\lceil\tfrac{(1+\tau)^{2}\Delta c}{\tL^2\lambda^{\frac{k+N-2\Delta}{2}}}\right\rceil,
		\end{aligned}
	\end{align}
	and for all $k$ and $t$,
	\begin{align}
		\label{eq:par_allk_S}
		& \eta_k^t = (p_k+\mu)(t-1) + p_kT_k, \ q_k^t = \tfrac{\tL T_k}{4 \beta_kR^2}, \
    \alpha_k^t = \begin{cases}
			\tfrac{\beta\kp T_k}{\beta_kT\kp}&k\ge 2\ \text{ and } t=1
			\\
			1 & \text{otherwise}.
		\end{cases}
	\end{align}
	Applying Algorithm \ref{alg:SPDS_agent} to solve problem \eqref{eq:problem_decen} we have
	\begin{align}
		\label{eq:fest_T_S}
		& f(\xu_N) - f(x^*) \le \min\left\{\tfrac{2}{N^2}, \lambda^{N-\Delta}\right\}\left[ 4\tL \|x_0 - x^*\|_2^2 + \tfrac{\tL\sigma^2}{c}\right]\text{ and }
		\\
		\label{eq:Axest_T_S}
		& \|\cA\xu_N\|_2 \le \min\left\{\tfrac{2}{N^2}, \lambda^{N-\Delta}\right\}\left[\tfrac{\tL}{8 R^2}(\|z^*\|_2+1)^2 + 4\tL \|x_0 - x^*\|_2^2 + \tfrac{\tL\sigma^2}{c}\right].
	\end{align}
	Specially, 
	if we set
	\begin{align}
		\label{eq:Rc_k}
		\begin{aligned}
		& R = \tfrac{\|z^*\|_2 +1}{4\|x_0-x^*\|_2}\text{ and } c= \tfrac{2\sigma^2}{\|x_0 -x^*\|_2^2},
		\end{aligned}
	\end{align}
	then 
	 we have the following sampling and communication complexity results for Algorithm \ref{alg:SPDS_agent} (here $\cO(1)$ is a constant that is independent of $\sigma$, $N$, $\|\cA\|$ and $\epsilon$):
	\begin{enumerate}[label=\emph{\alph*})]
		\item 
		\label{itm:result_nsc_S}
		If $\mu=0$, 
		by setting $N=\left\lceil \sqrt{{10\tL \|x_0 - x^*\|_2^2}/{\varepsilon}}\right\rceil$  
		we can obtain a stochastic $\varepsilon$-solution with $N + 800\sigma^2 \|x_0 - x^*\|_2^2/\varepsilon$  gradient samples and $N + 600\|\cA\|/\varepsilon$ rounds of communication.
		\item 
		\label{itm:result_sc_S}
		If $\mu>0$, 
		by setting $N=\left\lceil \Delta + \log_{\lambda^{-1}}({5\tL \|x_0 - x^*\|_2^2}/{\varepsilon})\right\rceil$
		we can obtain a stochastic $\varepsilon$-solution with $\cO(1)\left(1 + \sqrt{\tL/\mu}\log(\tL \|x_0 - x^*\|_2^2/\varepsilon) + \sigma^2(1+1/\varepsilon)\right)$ samples and $\cO(1)\left(1 + \sqrt{\tL/\mu}\log(\tL \|x_0 - x^*\|_2^2/\varepsilon) + \|\cA\|(1+1/\sqrt{\varepsilon})\right)$ rounds of communication.
	\end{enumerate}
\end{thm}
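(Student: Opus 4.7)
The plan is to follow the same structural template used in the proof of Theorem \ref{thm:spp}: (i) verify the parameter choices satisfy the hypotheses of the convergence proposition (Proposition \ref{pro:Qest_S} this time, with the modifications \eqref{eq:cond_modified_S}); (ii) estimate the additional stochastic variance term $\sum_k \beta_k\sigma^2/(p_k c_k)$ that appears in \eqref{eq:fest_S}--\eqref{eq:Axbest_S}; and (iii) translate the resulting convergence rate into sampling and communication counts.

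For step (i), the verification of \eqref{eq:cond_Qest_k}, \eqref{eq:cond_Qest_t} and \eqref{eq:cond_Qest_N} for Algorithm \ref{alg:SPDS_agent} is essentially a transcription of the verification in Theorem \ref{thm:spp}. The only substantive differences are that $p_k$ has been doubled (from $2\tL/k$ to $4\tL/k$, and from $\tL/(1+\tau)$ to $2\tL/(1+\tau)$) and that $\tau = 2\sqrt{\tL/\mu}$ rather than $\sqrt{2\tL/\mu}$. These two modifications are exactly what is needed to pass from $\tL\lambda_k\le p\kp\tau_k,\ p_N(\tau_N+1)\ge \tL$ to the stronger $2\tL\lambda_k\le p\kp\tau_k,\ p_N(\tau_N+1)\ge 2\tL$ of \eqref{eq:cond_modified_S}; every other condition carries over verbatim because $\tau_k,\lambda_k,\beta_k$ and the ratio $p_k/\tau$ scale the same way. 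I will treat the three cases $2\le k\le\Delta$, $k=\Delta+1$, $k>\Delta+1$ exactly as in Theorem \ref{thm:spp}.

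For step (ii), I will use the choice of $c_k$ in \eqref{eq:par_beforeDelta_S}--\eqref{eq:par_afterDelta_S} to bound $\beta_k/(p_k c_k)$. For $k\le\Delta$ we have $c_k \ge \min\{N,\Delta\}\beta_k c/(p_k\tL)$, hence $\beta_k\sigma^2/(p_k c_k) \le \tL\sigma^2/(\min\{N,\Delta\}c)$; summing over at most $\min\{N,\Delta\}$ such indices yields a contribution of order $\tL\sigma^2/c$. For $k>\Delta$, the geometric growth of $\beta_k = \Delta\lambda^{-(k-\Delta)}$ is exactly matched by the geometric growth of $c_k$, so $\beta_k/(p_k c_k) \le \lambda^{(N-k)/2}\tL/(\Delta c)$ up to constants, whose sum over $k=\Delta+1,\ldots,N$ is again $\cO(\tL \beta_N \sigma^2 / (\Delta c))$ after telescoping; dividing by $\sum_k\beta_k\ge \beta_N$ produces the claimed extra term $\cO(\tL\sigma^2/c)\cdot\min\{2/N^2,\lambda^{N-\Delta}\}$. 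Substituting into Proposition \ref{pro:Qest_S} then gives \eqref{eq:fest_T_S} and \eqref{eq:Axest_T_S}.

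For step (iii), with $R$ and $c$ chosen as in \eqref{eq:Rc_k}, the right-hand sides of \eqref{eq:fest_T_S}--\eqref{eq:Axest_T_S} collapse to $\min\{2/N^2,\lambda^{N-\Delta}\}\cdot\cO(\tL\|x_0-x^*\|_2^2)$, so the values of $N$ prescribed in parts \ref{itm:result_nsc_S} and \ref{itm:result_sc_S} yield a stochastic $\varepsilon$-solution. The communication count $2\sum_k T_k$ is bounded exactly as in Theorem \ref{thm:spp} (the $T_k$'s and $\lambda$ have the same functional form up to constants), giving $N + \cO(\|\cA\|/\varepsilon)$ in the convex case and $\cO(N + \|\cA\|/\sqrt{\varepsilon})$ in the strongly convex case. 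The sample count $\sum_k c_k$ requires one extra summation: in the convex case, $\sum_{k=1}^{N} c_k \le N + \cO(N^4 c/\tL^2)$, which with the chosen $N$ and $c$ becomes $\cO(\sigma^2\|x_0-x^*\|_2^2/\varepsilon^2)$; in the strongly convex case, $\sum_{k\le\Delta}c_k = \cO(\Delta^4 c/\tL^2)$ and $\sum_{k>\Delta}c_k$ is a geometric series in $\lambda^{-1/2}$ that sums to $\cO((1+\tau)^2 c \lambda^{-N/2}/\tL^2) = \cO(\sigma^2/\varepsilon)$.

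The main obstacle is step (ii): the geometric-growth regime for $k>\Delta$ requires a careful telescoping that matches the $\beta_k$ growth exactly against the $c_k$ growth, and one has to verify that the ceiling function in the definition of $c_k$ contributes only an additive $N$ to the total sample count rather than multiplicative overhead. Once that bookkeeping is done, the remainder of the argument is a direct parallel of Theorem \ref{thm:spp}.
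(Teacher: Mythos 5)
Your proposal is correct and follows essentially the same route as the paper's proof: verify the (modified) conditions of Proposition \ref{pro:Qest_S} by transcribing the case analysis of Theorem \ref{thm:spp} (the doubled $p_k$ and $\tau=2\sqrt{\tL/\mu}$ absorb \eqref{eq:cond_modified_S}), feed the choice of $c_k$ into the variance term of \eqref{eq:fest_S}--\eqref{eq:Axbest_S} to get \eqref{eq:fest_T_S}--\eqref{eq:Axest_T_S}, and then bound $\tsum_k c_k$ and $2\tsum_k T_k$ exactly as in the deterministic theorem. Two cosmetic slips worth fixing but not affecting the argument: for $k>\Delta$ your per-term bound $\beta_k/(p_kc_k)\le \cO(1)\lambda^{(N-k)/2}\tL/(\Delta c)$ sums to $\cO(1)\tL\sigma^2/c$ rather than $\cO(1)\tL\beta_N\sigma^2/(\Delta c)$, and it is this form that, after dividing by $\tsum_k\beta_k\ge\beta_N$, yields the $\lambda^{N-\Delta}$ factor in \eqref{eq:fest_T_S}; similarly the geometric tail of $\tsum_{k>\Delta}c_k$ is of order $\lambda^{-(N-\Delta)}=\cO(1)/\varepsilon$, not $\lambda^{-N/2}$, which is what produces the claimed $\cO(1)\sigma^2/\varepsilon$ sample count.
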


\begin{proof}
    Following a similar argument to the proof of Theorem \ref{thm:spp}, it is easy to verify that the parameter selections in \eqref{eq:par_beforeDelta_S} - \eqref{eq:par_allk_S} satisfy the conditions stated in Proposition~\ref{pro:Qest_S}. The results in \eqref{eq:fest_T_S} and \eqref{eq:Axest_T_S} follows immediately from the proposition and our parameter selections.
	We now consider the case when $\mu=0$ (hence $\Delta=+\infty$). 
	By the choice of $N$, the selections of $\beta_k$, $p_k$, $R$, and $c$ in \eqref{eq:par_beforeDelta_S} and \eqref{eq:Rc_k}, and noting that $\sum_{k=1}^{N}k^2\le N^3$ we can bound the total number of gradient samples by
	\begin{align}
		\tsum_{k=1}^{N}c_k \le & \tsum_{k=1}^{N}\left(1 + \tfrac{cNk^2}{4\tL^2}\right) = N + \tfrac{\sigma^2N}{2\tL^2 \|x_0 - x^*\|_2^2}\sum_{k=1}^{N}k^2
		\\
		\le & N + \tfrac{\sigma^2N^4}{2\tL^2 \|x_0 - x^*\|_2^2} = N+
\tfrac{800\sigma^2\|x_0-x^*\|_2^2}{\varepsilon^2}
		.
	\end{align}
In the case when $\mu>0$, similar to the proof of Theorem \ref{thm:spp} we can observe two bounds
$
	N\le \cO(1)\left(1 + \sqrt{\tL/\mu}\log(\tL \|x_0-x^*\|_2^2/\varepsilon)\right)
$
and
$
\lambda^{-N} \le (1/{\varepsilon})\cO(1).
$
In view of these bounds, the descriptions of $c_k$ and $c$ in \eqref{eq:par_beforeDelta_S}, \eqref{eq:par_afterDelta_S}, and \eqref{eq:Rc_k}, we can bound the total number of gradient samples by
\begin{align}
	\tsum_{k=1}^{N}c_k \le & \tsum_{k=1}^{\Delta}\left(1 + \frac{\Delta \sigma^2 k^2}{2\tL^2 \|x_0-x^*\|_2^2}\right) + \sum_{k=\Delta+1}^{N}\left(1 + \frac{(1+\tau)^{2}\Delta c}{2\tL^2\lambda^{\frac{N-\Delta}{2}}}\lambda^{-\frac{k-\Delta}{2}}\right)
	\\
	= & N + \cO(1)\sigma^2  + \cO(1)\frac{1-\lambda^{\frac{N-\Delta}{2}}}{1 - \lambda^{\frac{1}{2}}}\sigma^2\lambda^{-(N-\Delta)}
	\\
	\le & \cO(1)\left(1 + \sqrt{\tL/\mu}\log(\tL \|x_0-x^*\|_2^2/\varepsilon)\right) + \cO(1)\sigma^2 + \cO(1){\sigma^2}/{\varepsilon}.
\end{align}

\end{proof}
 
\vgap

In view of Theorem~\ref{thm:spp_S}, we can conclude that the sampling complexity achieved by the SPDS algorithm does not depend on the communication network topology; it is only dependent on the Lipschitz constant $\tL$,  strong convexity parameter $\mu$, the variance of the gradient estimators $\sigma^2$, and proximity to an optimal solution $\|x_0-x^*\|_2$. 
Moreover, the achieved sampling complexity is in the same order of magnitude as those optimal centralized algorithms, such as accelerated stochastic approximation method \cite{ghadimi2012optimal,ghadimi2013optimal}.
To the best of our knowledge, this is the first time such sampling complexity bounds are established for solving stochastic decentralized optimization problems of form \eqref{eq:problem_multi_agent}.
Indeed, the total number of communications required to compute an $\varepsilon$-solution still depends on the graph topology. It should also be noted that the constant factors in part a) of the above theorem (800 and 600) can be further reduced, e.g., by assuming that $N\ge 5$. However, our focus is on the development of order-optimal complexity results and hence we skip these refinements.

\section{A byproduct on solving bilinearly coupled saddle point problems}
\label{sec:SPP} 

In this section, we present a byproduct of our analysis and show that the PDS algorithm described in Algorithm \ref{alg:PDS} is in fact an efficient algorithm for solving certain convex-concave saddle point problem with linear coupling. Specifically, in this section we consider the following saddle point problem:
\begin{align}
	\label{eq:SPP_general}
	\min_{x\in \cX}\max_{z\in\cZ}f(x) + \langle \cA x, z\rangle - h(z)
\end{align}
where $\cX$ and $\cZ$ are closed convex sets, $\cA$ is a linear operator, and $f$ and $h$ are convex functions. Here, we assume that $h(z)$ is relatively simple, and that $f(x) = \tf(x) + \mu \nu(x)$, where $\mu\ge 0$, $\nu:\cX\to\R$ is a strongly convex function with strong convexity modulus $1$ with respect to a norm $\|\cdot\|$, and $\tf:X\to\R$ is a convex smooth function 
whose gradient is Lipschitz with constant $\tL$ 
with respect to the norm $\|\cdot\|$. Note that problem \eqref{eq:SPP_general} is equivalent to 
\begin{align}
	\label{eq:SPP_general_with_conjugate}
	\min_{x\in \cX}\max_{y\in\cY, z\in \cZ} \mu\nu(x) + \langle x, y+ \cA^\top z\rangle - \tf^*(y) - h(z)
\end{align}
where $\tf^*$ is the convex conjugate of $\tf$ and $\cY:=\dom \tf^*$.
Also, observe that when $\cZ$ is a vector space and $h(z) = \langle b, z\rangle$, the saddle point problem \eqref{eq:SPP_general} becomes 
\begin{align}
	\label{eq:problem_lco}
	\min_{x\in\cX} f(x) \st \cA x = b.
\end{align}
If in addition 
$b=0$, $\cZ=\R^{md}$, and $\cX$, $f$, and $\cA$ are defined in \eqref{eq:problem_decen}, then we obtain the linearly constrained reformulation of the multi-agent optimization problem \eqref{eq:problem_multi_agent}. 

We describe an extension of the PDS algorithm in Algorithm \ref{alg:PDS_SPP}. In the algorithm description, $U$, $V$, and $W$ are prox-functions defined by
\begin{align}
	\label{eq:U}
	U(\hat z, z):= & \zeta(z) - \zeta(\hz) - \langle \zeta'(\hz), z - \hz\rangle,
	\\
	\label{eq:V}
	V(\hx, x):= & \nu(x) - \nu(\hx) - \langle \nu'(\hx), x - \hx\rangle,\text{ and }
	\\
	\label{eq:W}
	W(v,y):=&\tf^*(y) - \tf^*(v) - \langle (\tf^*)'(v), y - v\rangle.
\end{align}
respectively, 
where $\nu$ and $\tf^*$ are defined in \eqref{eq:SPP_general_with_conjugate} and $\zeta:\cZ\to\R$ is a strongly convex function with strong convexity modulus $1$ with respect a norm $|\cdot|$. Noting from the strong convexity of $\zeta$ and $\nu$ we have for any $\hat z, z\in \cZ$ and $\hat x, x\in \cX$ that
\begin{align}
	\label{eq:UVbound}
	U(\hat z, z)\ge \tfrac{1}{2}|\hz - z|^2 \text{ and }V(\hx, x)\ge \tfrac{1}{2}\|\hx - x\|^2.
\end{align}
Moreover, since $\tf$ is convex and $\nabla \tf$ is Lipschitz with constant $\tL$ with respect to norm $\|\cdot\|$, $\tf^*$ should be strongly convex with strong convexity parameter $1/\tL$ with respect to the dual norm $\|\cdot\|_*$. Consequently, noting the definition of $W$ in \eqref{eq:W} we have
\begin{align}
	\label{eq:Wbound}
	& W(v, y)\ge \tfrac{1}{2\tL}\|v - y\|_*^2, \ \forall v, y\in \cY.
\end{align}

\begin{algorithm}[h]
	\caption{\label{alg:PDS_SPP}An equivalent alternative description of the PDS algorithm}
	\begin{algorithmic}
		\State Choose $x_0\in \cX$, $y_0\in \cY$, and $z_0\in \cZ$. Set $\hx_0 = x_{-1} = x_0$.
		\For {$k=1,\ldots,N$}
		\State Compute
		\begin{align}
			\label{eq:txk_SPP}
			\tx_k = & x\kp + \lambda_k(\hx\kp - x_{k-2}),
			\\
			\label{eq:yk_SPP}
			y_k := & \argmin_{y\in \cY}\langle -\tx_k, y\rangle + \tf^*(y) + \tau_kW(y\kp, y).
		\end{align}
		\State Set $x_k^0 = x\kp$, $z_k^0 = z\kp$, and $x_k^{-1} = x\kp^{T\kp-1}$ (when $k=1$, set $x_1^{-1}=x_0$).
		\For {$t=1,\ldots,T_k$}
		\begin{align}
			\label{eq:tukt_SPP}
			\tu_k^t = & x_k\tp + \alpha_k^t(x_k\tp - x_k^{t-2})
			\\
			\label{eq:zkt_SPP}
			z_k^t =& \argmin_{z\in \cZ}h(z) + \langle -\cA\tu_k^t, z\rangle + q_k^tU(z_k^{t-1}, z)
			\\
			\label{eq:xkt_SPP}
			x_k^t = & \argmin_{x\in \cX} \mu\nu(x) + \langle y_k + \cA^\top z_k^t, x\rangle + \eta_k^tV(x_{k}^{t-1}, x) + p_kV(x\kp, x)
		\end{align} 
		\EndFor
		\State Set $x_k = x_k^{T_k}$, $z_k = z_k^{T_k}$, $\hx_k = \sum_{t=1}^{T_k}x_k^t/T_k$, and $\hz_k = \sum_{t=1}^{T_k}z_k^t/T_k$.
		\EndFor 
		\State Output $\wu_N:=(\xu_N, \yu_N, \zu_N):=\left(\sum_{k=1}^{N}\beta_k\right)^{-1}\left(\sum_{k=1}^N\beta_k(\hx_k,y_k,\hz_k)\right)$.
	\end{algorithmic}
\end{algorithm}

Extending the analysis of Algorithm \ref{alg:PDS}, it is straightforward to observe that the equivalent description of the PDS algorithm described in Algorithm \ref{alg:PDS_SPP} can be applied to solve the more general saddle point problem \eqref{eq:SPP_general_with_conjugate}. We will evaluate the accuracy through the 
gap function defined by
\begin{align}
	\label{eq:Qhwk_SPP}
	\begin{aligned}
		Q(\hw_k,w) := & \left[\mu\nu(\hx_k) + \langle \hx_k, y+\cA^\top  z\rangle - \tf^*(y) - h(z)\right] 
		\\
		& - \left[\mu\nu(x) + \langle x, y_k+\cA^\top \hz_k\rangle - \tf^*(y_k) - h(\hz_k)\right],
	\end{aligned}
\end{align}
in which $\hw_k:=(\hx_k,y_k,\hz_k)$ and $w:=(x,y,z)$.
We have the following proposition describing the convergence result, whose proof will be presented in Section \ref{sec:analysis}.

\begin{pro}
	\label{pro:Qest_proof}
	Suppose that the conditions \eqref{eq:cond_Qest_k}--\eqref{eq:cond_Qest_N} (where $\|\cA\|$ is the norm induced by $\|\cdot\|$ and $|\cdot|$) described in Proposition \ref{pro:Qest} hold for Algorithm \ref{alg:PDS_SPP}. 
	For all $w\in \cX\times\cY\times \cZ$ we have the following convergence result when Applying Algorithm \ref{alg:PDS_SPP} to solve problem \eqref{eq:SPP_general_with_conjugate}:
	\begin{align}
		\label{eq:Qest_proof}
		\begin{aligned}
			&  Q(\wu_N, w) \le \left(\tsum_{k=1}^{N}\beta_k\right)^{-1}\beta_1\left[\tfrac{q_1^1}{T_1}U(z_0, z) + \left(\tfrac{\eta_1^1}{T_1} + p_1\right)V(x_0, x)\right].
		\end{aligned}
	\end{align}
	Moreover, in the special case of problem \eqref{eq:problem_lco} (i.e., if $\cZ$ is a vector space with Euclidean norm $|\cdot|=\|\cdot\|_2$, the prox-function $U(\cdot,\cdot)=\|\cdot-\cdot\|_2^2/2$, and $h(z) = \langle b, z\rangle$), then we can set the initial value $z_0=0$ in Algorithm \ref{alg:PDS_SPP} and obtain 
	\begin{align}
		\label{eq:fest_proof}
		& f(\xu_N) - f(x^*) \le \left(\tsum_{k=1}^{N}\beta_k\right)^{-1}\beta_1\left(\tfrac{\eta_1^1}{T_1} + p_1\right)V(x_0, x^*)\text{ and }
			\\
		\label{eq:Axbest_proof}
		& \|\cA \xu_N - b\|_2 \le \left(\tsum_{k=1}^{N}\beta_k\right)^{-1}\beta_1\left[ \tfrac{q_1^1}{2T_1} (\|z^*\|_2+1)^2 + \left(\tfrac{\eta_1^1}{T_1} + p_1\right)V(x_0, x^*)\right].
	\end{align}
\end{pro}

\vgap

We provide a set of example parameters for Algorithm \ref{alg:PDS_SPP} in the theorem below. The proof of the theorem below is similar to that of Theorem \ref{thm:spp} and thus is skipped.

\vgap

\begin{thm}
	\label{thm:spp_general}
	Suppose that the parameters of Algorithm \ref{alg:PDS_SPP} are set to the ones described in Theorem \ref{thm:spp}. Applying Algorithm \ref{alg:PDS_SPP} to solve problem \eqref{eq:SPP_general} we have
		\begin{align}
			\begin{aligned}
				  Q(\wu_N, w) 
				\le &  \min\left\{\tfrac{2}{N^2}, \lambda^{N-\Delta}\right\}\left[\tfrac{\tL}{2 R^2}U(z_0,z^*) + 4\tL V(x_0, x^*)\right].
			\end{aligned}
		\end{align}
		Specially, if $R$ is set to \eqref{eq:R}, then 
		we have the following complexity results for Algorithm \ref{alg:PDS_SPP} to obtain an $\epsilon$-solution of problem~\eqref{eq:SPP_general}
		(here $\cO(1)$ is a constant that is independent of $N$, $\|\cA\|$ and $\epsilon$):
		\begin{enumerate}[label=\emph{\alph*})]
			\item 
			If $\mu=0$, then the PDS algorithm can obtain an $\epsilon$-solution of problem~\eqref{eq:SPP_general}
			after at most $\cO(1)\left(\sqrt{{\tL V(x_0,x^*)}/{\varepsilon}}\right)$ number of gradient evaluations and $\cO(1)\left(\sqrt{{\tL V(x_0,x^*)}/{\varepsilon}}+{\|\cA\|}/{\varepsilon}\right)$ number of operator computations (involving $\cA$ and $\cA^\top$).
			\item 
			If $\mu>0$, then the PDS algorithm can obtain an $\epsilon$-solution of problem~\eqref{eq:SPP_general}
			after at most $\cO(1)\left(1 + \sqrt{2\tL/\mu}\log(\tL V(x_0,x^*)/\varepsilon\right)$ number of gradient evaluations and  $\cO(1)\left(1 + \sqrt{2\tL/\mu}\log(\tL V(x_0,x^*)/\varepsilon) + \|\cA\|(1+1/\sqrt{\varepsilon)}\right)$ number of operator computations (involving $\cA$ and $\cA^\top$).
		\end{enumerate}
\end{thm}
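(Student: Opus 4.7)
The plan is to follow almost verbatim the argument of Theorem~\ref{thm:spp}, replacing the appeal to Proposition~\ref{pro:Qest} by an appeal to Proposition~\ref{pro:Qest_proof}, which gives a bound on the gap function $Q(\wu_N,w)$ rather than the primal suboptimality/feasibility pair. The key observation is that the parameter conditions \eqref{eq:cond_Qest_k}--\eqref{eq:cond_Qest_N} required in Proposition~\ref{pro:Qest_proof} are exactly those required in Proposition~\ref{pro:Qest}; hence the verifications already carried out in the proof of Theorem~\ref{thm:spp} under the choices \eqref{eq:par_beforeDelta}--\eqref{eq:par_allk} can be reused without change. So the first step is simply to quote those verifications.

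Next I would substitute the chosen parameters into the right hand side of \eqref{eq:Qest_proof}. Since $\eta_k^t = (p_k+\mu)(t-1) + p_kT_k$ and $\beta_1 = 1$, we get $\eta_1^1/T_1 = p_1$, and with $p_1 = 2\tL$ this yields $\eta_1^1/T_1 + p_1 = 4\tL$; similarly $q_1^1/T_1 = \tL/(2R^2)$. Plugging in,
\begin{align*}
Q(\wu_N,w) \le \Bigl(\tsum_{k=1}^N\beta_k\Bigr)^{-1}\Bigl[\tfrac{\tL}{2R^2}U(z_0,z) + 4\tL V(x_0,x)\Bigr].
\end{align*}
Lower bounding $\sum_{k=1}^N\beta_k$ exactly as in Theorem~\ref{thm:spp} (by $N^2/2$ when $N\le\Delta$ and by $\lambda^{-(N-\Delta)}$ when $N>\Delta$) gives the announced bound
\begin{align*}
Q(\wu_N,w) \le \min\bigl\{\tfrac{2}{N^2},\lambda^{N-\Delta}\bigr\}\Bigl[\tfrac{\tL}{2R^2}U(z_0,z^*) + 4\tL V(x_0,x^*)\Bigr].
\end{align*}

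For the complexity statements (a) and (b), I would set $R$ according to \eqref{eq:R} so the two bracketed terms balance up to a constant, and then solve for the smallest $N$ that makes the right hand side at most $\varepsilon$. In the convex case $\mu=0$ this yields $N=\cO(1)\sqrt{\tL V(x_0,x^*)/\varepsilon}$, while in the strongly convex case $\mu>0$ the geometric decay gives $N=\cO(1)(1+\sqrt{2\tL/\mu}\log(\tL V(x_0,x^*)/\varepsilon))$, identically to Theorem~\ref{thm:spp}. The operator complexity is then $\sum_{k=1}^N T_k$: in case (a) we bound $T_k\le 1+kR\|\cA\|/\tL$ so the sum is $\cO(N) + \cO(N^2 R\|\cA\|/\tL) = \cO(\sqrt{\tL V(x_0,x^*)/\varepsilon}) + \cO(\|\cA\|/\varepsilon)$; in case (b) we split the sum at $k=\Delta$ and use a geometric series for $k>\Delta$, exactly as at the end of Theorem~\ref{thm:spp}'s proof, to obtain $\cO(N) + \cO(\|\cA\|) + \cO(\|\cA\|/\sqrt{\varepsilon})$.

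The main conceptual point, rather than any obstacle, is that Proposition~\ref{pro:Qest_proof} legitimately covers the more general setting \eqref{eq:SPP_general_with_conjugate}: the prox-function $U$ on $\cZ$ and the nontrivial function $h(z)$ are handled inside the proposition itself, so no new per-iteration estimates are needed here. The only minor bookkeeping item is to ensure $\|\cA\|$ in the conditions is interpreted as the operator norm induced by $\|\cdot\|$ on $\cX$ and $|\cdot|$ on $\cZ$, which is explicitly stated in Proposition~\ref{pro:Qest_proof}; with that understanding, all the computations from Theorem~\ref{thm:spp} transfer verbatim.
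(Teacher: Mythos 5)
Your proposal is correct and matches the paper's intended argument: the paper explicitly omits this proof, stating it is similar to that of Theorem \ref{thm:spp}, and your plan—reusing the parameter verifications from Theorem \ref{thm:spp} (whose conditions coincide with \eqref{eq:cond_Qest_k}--\eqref{eq:cond_Qest_N}), invoking Proposition \ref{pro:Qest_proof} in place of Proposition \ref{pro:Qest}, substituting $\eta_1^1/T_1+p_1=4\tL$ and $q_1^1/T_1=\tL/(2R^2)$, and bounding $\sum_k\beta_k$ and $\sum_k T_k$ exactly as before—is precisely that argument. The computations you cite check out, including the remark that $\|\cA\|$ is the operator norm induced by $\|\cdot\|$ and $|\cdot|$.
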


\vgap

To the best of our knowledge, this is the first time that a primal-dual type algorithm achieves the above complexity results for solving bilinearly coupled saddle point problems. The only other method that achieves such complexity results is presented in \cite{lan2016accelerated}, which requires to utilize a smoothing technique developed in \cite{nesterov2005smooth}. However, the smoothing technique requires that either set $\cX$ or $\cZ$ is compact and hence is not applicable to the constrained optimization problem \eqref{eq:problem_lco}. Our proposed algorithm is the first algorithm in the literature that is able to solve the linearly constrained optimization problem \eqref{eq:problem_lco} with the above complexities.

\section{Convergence analysis}
\label{sec:analysis}
Our goal in this section is to complete the remaining proofs in Sections \ref{sec:PDS}--\ref{sec:SPP}, including the proofs of Propositions \ref{pro:Qest}, \ref{pro:Qest_S}, and \ref{pro:Qest_proof}. 
We first need the following proposition that describes an estimate of the gap function in \eqref{eq:Qhwk_SPP}:
\begin{pro}
	\label{pro:Qest_raw}
	Suppose that $\hx_k=\sum_{t=1}^{T_k}x_k^t/T_k$ and $\hz_k=\sum_{t=1}^{T_k}z_k^t/T_k$, where the iterates $\{x_k^t\}_{t=1}^{T_k}$ and $\{z_k^t\}_{t=1}^{T_k}$ are defined by 
	\begin{align}
	\label{eq:zkt_proof}
	z_k^t =& \argmin_{z\in \cZ}h(z) + \langle -\cA\tu_k^t, z\rangle + q_k^tU(z_k^{t-1}, z)\text{ and }
	\\
	\label{eq:xkt_proof}
	x_k^t = & \argmin_{x\in \cX} \mu\nu(x) + \langle v_k + \cA^\top z_k^t, x\rangle + \eta_k^tV(x_{k}^{t-1}, x) + p_kV(x\kp, x)
	\end{align}
	respectively. Letting $\hw_k:=(\hx_k,y_k,\hz_k)$ we have
	\begin{align}
	\label{eq:Qest_raw}
	\tsum_{k=1}^{N}\beta_kQ(\hat w_k, w) + A + B \le C + D,\ \forall w:=(x,y,z)\in\cX\times\cY\times\cZ
	\end{align}
	where
	\begingroup
	\allowdisplaybreaks
	\begin{align}
	\label{eq:A}
	& \begin{aligned}
	A: = & \tsum_{k=1}^{N}\beta_k\left[- \langle \hx_k, y\rangle + \langle x, y_k\rangle + \langle v_k, \hx_k - x\rangle - \tf^*(v_k) + \tf^*(y) 
	\right.
	\\
	&\qquad\qquad\left. + \tfrac{p_k}{T_k}\tsum_{t=1}^{T_k}V(x\kp, x_k^t)\right],
	\end{aligned}
	\\
	\label{eq:B}
	& \begin{aligned}
	B:= & \tsum_{k=1}^{N}\tfrac{\beta_k}{T_k}\sum_{t=1}^{T_k}\left[\langle \cA^\top z_k^t - \cA^\top z, x_k^t - \tu_k^t\rangle  \right.
	\\
	& \qquad\qquad\qquad\left.+ q_k^tU(z_k\tp, z_k^t)  + \eta_k^tV(x_k\tp, x_k^t)\right],
	\end{aligned}
	\\
	\label{eq:C}
	& C:= \tsum_{k=1}^{N}\tfrac{\beta_k}{T_k}\sum_{t=1}^{T_k}\left[q_k^tU(z_k\tp, z) - q_k^tU(z_k^t,z)\right],
	\\
	\label{eq:D}
	& D:= \tsum_{k=1}^{N}\tfrac{\beta_k}{T_k}\sum_{t=1}^{T_k}\left[\eta_k^t V(x_k\tp, x) - (\mu+\eta_k^t+p_k)V(x_k^t, x) + p_kV(x\kp, x)\right].
	\end{align}
	\endgroup
\end{pro}

\begin{proof}
	By the optimality conditions of \eqref{eq:zkt_proof} and \eqref{eq:xkt_proof} and definitions of $U$ and $V$ in \eqref{eq:U} and \eqref{eq:V} respectively, we have 
	\begin{align}
		& \langle h'(z_k^t) - \cA \tu_k^t + q_k^t\zeta'(z_k^t) - q_k^t\zeta'(z_k\tp), z_k^t -z \rangle\le 0,\ \forall z\in \cZ,\text{ and }
		\\
		& \langle (\mu+\eta_k^t +p_k)\nu'(x_k^t) + v_k + \cA^\top z_k^t - \eta_k^t \nu'(x_k\tp) - p_k\nu'(x\kp), x_k^t - x\rangle\le 0, \ \forall x\in\cX.
	\end{align}
	In view of the above results, the convexity of $h$ and $\nu$, and the definitions of $U$ and $V$ we obtain the following two relations: for any $z\in\cZ$ and $x\in\cX$, 
	\begin{align}
	\label{eq:oc_zkt}
	& h(z_k^t) - h(z) + \langle -\cA\tu_k^t, z_k^t - z\rangle + q_k^tU(z_k\tp, z_k^t) + q_k^tU(z_k^t,z) \le q_k^tU(z_k\tp, z), \text{ and }
	\\
	& 
	\begin{aligned}
	& \langle v_k+\cA^\top z_k^t, x_k^t - x\rangle + \mu\nu(x_k^t) - \mu\nu(x) 
	\\
	& + \eta_k^tV(x_k\tp, x_k^t) + (\mu + \eta_k^t+p_k)V(x_k^t, x)  + p_kV(x\kp, x_k^t)
	\\
	\le & \eta_k^t V(x_k\tp, x) + p_kV(x\kp, x).
		\end{aligned}
	\label{eq:oc_xkt}
	\end{align}
	Summing up the two relations above, while noting that
	\begin{align}
	& \langle -\cA\tu_k^t, z_k^t - z\rangle + \langle v_k+\cA^\top z_k^t, x_k^t - x\rangle 
	\\
	= & \langle \cA^\top z_k^t - \cA^\top z, x_k^t - \tu_k^t\rangle + \langle \cA^\top z, x_k^t\rangle - \langle v_k+\cA^\top z_k^t, x\rangle + \langle v_k, x_k^t\rangle,
	\end{align}
	we have
	\begin{align}
	& \langle \cA^\top z_k^t - \cA^\top z, x_k^t - \tu_k^t\rangle + \langle \cA^\top z, x_k^t\rangle - \langle v_k+\cA^\top z_k^t, x\rangle + \langle v_k, x_k^t\rangle 
	\\
	& + h(z_k^t) - h(z) +  q_k^tU(z_k\tp, z_k^t) + q_k^tU(z_k^t,z) 
	\\
	& + \mu\nu(x_k^t) - \mu\nu(x) + \eta_k^tV(x_k\tp, x_k^t) + (\mu + \eta_k^t+p_k)V(x_k^t, x)  + p_kV(x\kp, x_k^t)
	\\
	\le & q_k^tU(z_k\tp, z) + \eta_k^t V(x_k\tp, x) + p_kV(x\kp, x).
	\end{align}
	Summing from $t=1,\ldots,T_k$ and noting the definitions of $\hx_k$ and $\hz_k$ and the convexity of functions $h$ and $\nu$ we have
	\begin{align}
	& \ T_k \left[\langle \cA^\top z, \hx_k\rangle - \langle v_k+\cA^\top \hz_k, x\rangle + \langle v_k, \hx_k\rangle + h(\hz_k) - h(z) + \mu\nu(\hx_k) - \mu\nu(x)\right]
	\\
	& \ + \tsum_{t=1}^{T_k}p_k \left[V(x\kp, x_k^t) + \langle \cA^\top z_k^t - \cA^\top z, x_k^t - \tu_k^t\rangle + q_k^tU(z_k\tp, z_k^t)  + \eta_k^tV(x_k\tp, x_k^t)\right]
	\\
	& \le  \tsum_{t=1}^{T_k}\left[q_k^tU(z_k\tp, z) - q_k^tU(z_k^t,z)\right. 
	\\
	& \qquad\qquad \left. + \eta_k^t V(x_k\tp, x) - (\mu + \eta_k^t+p_k)V(x_k^t, x) + p_kV(x\kp, x)\right].
	\end{align}
	We conclude \eqref{eq:Qest_raw} by multiplying the above relation by $\beta_k/T_k$, noting the definition of the gap function $Q(\hw_k,w)$ in \eqref{eq:Qhwk_SPP} and summing the resulting relation from $k=1, \dots, N$.
\end{proof}

\vgap

It should be noted that the above result is valid for any choices of $v_k$, $\tu_k^t$ and $x_{k-1}$ used in the descriptions of $z_k^t$ and $x_k^t$ in \eqref{eq:zkt_proof} and \eqref{eq:xkt_proof} respectively. 
Now we are ready to establish tight bounds of $A$, $B$, $C$, and $D$ in \eqref{eq:Qest_raw} through a series of technical lemmas. 
To provide a tight estimate of $A$, we start with the case when $v_k$ is the exact solution to the dual prox-mapping subproblem \eqref{eq:yk_SPP}.

\vgap

\begin{lem}
	\label{lem:Aest}
	Suppose that $v_k=y_k$ where $y_k$ is defined by \eqref{eq:txk_SPP} and \eqref{eq:yk_SPP}. If 
	\begin{align}
	\label{eq:cond_Aest}
	\tau_1=0, \beta_k\tau_k\le \beta\kp(\tau\kp+1),
	\beta\kp = \beta_k\lambda_k, \text{ and } \tL\lambda_k\le p\kp\tau_k,\ \forall k\ge 2,
	\end{align}
	then we have
	\begin{align}
	\label{eq:Aest}
	A \ge - \beta_{N} \langle x_{N-1} - \hx_N, y_N - y\rangle + \tfrac{\beta_Np_N}{2}\|x_{N-1} - \hx_{N}\|^2  + \beta_N(\tau_N+1)W(y_N,y).
	\end{align}
\end{lem}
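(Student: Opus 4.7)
The plan is to reduce $A$ to a form in which the optimality condition for $y_k$ yields the desired lower bound, then telescope carefully using the extrapolation rules. Substituting $v_k = y_k$ collapses the linear-in-$y$ and linear-in-$y_k$ pieces of the summand of $A$ to $\langle \hx_k, y_k - y\rangle + \tf^*(y) - \tf^*(y_k)$. The crux is to exploit that in the subproblem \eqref{eq:yk_SPP} the prox-function $W$ is itself the Bregman divergence of $\tf^*$: this makes the minimized objective $(1+\tau_k)$-strongly convex with respect to $W$, so that the sharpened three-point inequality
\begin{align*}
\tf^*(y) - \tf^*(y_k) \ge -\langle \tx_k, y_k - y\rangle + \tau_k W(y_{k-1}, y_k) + (\tau_k + 1) W(y_k, y) - \tau_k W(y_{k-1}, y)
\end{align*}
holds. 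The coefficient $(\tau_k + 1)$ on $W(y_k, y)$ — rather than the mere $\tau_k$ that would come from the generic three-point inequality — is the linchpin; without it the telescoping below loses precisely the $(\tau_N + 1)$ factor required in the conclusion.

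Define $\delta_k := \hx_k - x_{k-1}$ (so $\delta_0 = 0$ by the initialization $\hx_0 = x_{-1} = x_0$). Using \eqref{eq:txk_SPP} we write $\hx_k - \tx_k = \delta_k - \lambda_k \delta_{k-1}$, and then, upon invoking $\beta_{k-1} = \beta_k \lambda_k$, the cross terms telescope into
\begin{align*}
\sum_{k=1}^N \beta_k \langle \hx_k - \tx_k, y_k - y\rangle = \beta_N \langle \delta_N, y_N - y\rangle - \sum_{k=2}^N \beta_{k-1}\langle \delta_{k-1}, y_k - y_{k-1}\rangle.
\end{align*}
Each residual inner product is split by Young's inequality with weight $p_{k-1}$; converting the dual norm via \eqref{eq:Wbound} gives
\begin{align*}
\beta_{k-1}\langle \delta_{k-1}, y_k - y_{k-1}\rangle \le \tfrac{\beta_{k-1} p_{k-1}}{2}\|\delta_{k-1}\|^2 + \tfrac{\beta_{k-1}\tL}{p_{k-1}} W(y_{k-1}, y_k).
\end{align*}
The $\|\delta_{k-1}\|^2$ pieces are absorbed by the prox contributions, since Jensen's inequality together with \eqref{eq:UVbound} yields $\tfrac{p_k}{T_k}\sum_{t=1}^{T_k} V(x_{k-1}, x_k^t) \ge \tfrac{p_k}{2}\|\delta_k\|^2$, and telescoping across $k$ leaves only $\tfrac{\beta_N p_N}{2}\|\delta_N\|^2$. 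The $W(y_{k-1}, y_k)$ pieces are absorbed by the $\beta_k \tau_k W(y_{k-1}, y_k)$ terms from the sharpened three-point bound, since $\tL\lambda_k \le p_{k-1}\tau_k$ combined with $\beta_{k-1} = \beta_k\lambda_k$ yields $\beta_k \tau_k \ge \tfrac{\beta_{k-1}\tL}{p_{k-1}}$.

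The final telescoping is on the paired terms $\beta_k(\tau_k+1) W(y_k, y) - \beta_k \tau_k W(y_{k-1}, y)$: reindexing and using $\tau_1 = 0$ together with $\beta_k \tau_k \le \beta_{k-1}(\tau_{k-1}+1)$, every intermediate coefficient is non-negative, and only $\beta_N(\tau_N+1) W(y_N, y)$ survives. Rewriting $\beta_N \langle \delta_N, y_N - y\rangle = -\beta_N \langle x_{N-1} - \hx_N, y_N - y\rangle$ and reading off the remaining lower bound yields \eqref{eq:Aest}. The principal obstacle is the sharpened three-point inequality of the first paragraph; all subsequent manipulations are bookkeeping in which the four hypotheses of \eqref{eq:cond_Aest} are consumed in a one-to-one fashion, so any weakening of the dual optimality step would immediately break the argument.
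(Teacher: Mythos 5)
Your proposal is correct and follows essentially the same route as the paper: the sharpened three-point inequality you state is exactly the paper's bound \eqref{eq:oc_yk} derived from the optimality condition of \eqref{eq:yk_SPP} (with the $(\tau_k+1)$ coefficient coming from $W$ being the Bregman divergence of $\tf^*$ itself), and the subsequent telescoping of the cross terms, absorption of $\|\delta_{k-1}\|^2$ by the prox terms, and domination of $W(y_{k-1},y_k)$ via $\tL\lambda_k\le p_{k-1}\tau_k$ mirror the paper's argument. The only cosmetic difference is that you split the residual inner product by Young's inequality with weight $p_{k-1}$, whereas the paper groups the three terms and checks a discriminant condition via $a^2+b^2\ge 2ab$ — these are equivalent estimates.
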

\begin{proof}
	By the optimality condition of \eqref{eq:yk_SPP} and the definition of $W$ in \eqref{eq:W}
	\begin{align}
		\langle -\tx_k + (1+\tau_k)(\tf^*)'(y_k) -\tau_k(\tf^*)'(y\kp), y_k - y\rangle\le 0, \ \forall y\in\cY, 
	\end{align}
	which together with the convexity of $\tf^*$ and the definition of $W$ in \eqref{eq:W} implies
		\begin{align}
			\label{eq:oc_yk}
			\begin{aligned}
			& \langle -\tx_k, y_k - y\rangle + \tf^*(y_k) - \tf^*(y) + \tau_kW(y\kp, y_k) + (\tau_k+1)W(y_k,y) 
			\\
			\le & \tau_kW(y\kp, y), \ \forall y\in \cY.
		\end{aligned}
		\end{align}
	Combining the above relation with $A$ in \eqref{eq:A} and noting that $v_k=y_k$ we have
	\begin{align}
	A \ge & \tsum_{k=1}^{N}\beta_k
	\left[
	\vphantom{\sum_{t=1}^{T_k}}
	(\tau_k+1)W(y_k,y) - \tau_kW(y\kp, y) + \tau_kW(y\kp, y_k)  
	\right.
	\\
	& \qquad\qquad 
	\left. - \langle \hx_k, y\rangle + \langle y_k, \hx_k\rangle + \langle -\tx_k, y_k - y\rangle + \tfrac{p_k}{T_k}\tsum_{t=1}^{T_k}V(x\kp, x_k^t)\right]
	\end{align}
	From the definition of $\tx_k$ we can observe that
	\begin{align}
		& -\langle \hx_k, y\rangle + \langle y_k, \hx_k\rangle + \langle -\tx_k, y_k - y\rangle = -\langle x\kp + \lambda_k(\hx\kp - x_{k-2}) - \hx_k, y_k - y\rangle
		\\
		= & \lambda_k\langle x_{k-2} - \hx\kp, y\kp - y\rangle - \langle x\kp - \hx_k, y_k - y\rangle + \lambda_k\langle x_{k-2} - \hx\kp, y_k - y\kp\rangle.
	\end{align}
	Also, by the definition of $\hx_k$ and the fact that $V$ is lower bounded in \eqref{eq:UVbound} we have
	\begin{align}
		& \tfrac{1}{T_k}\tsum_{t=1}^{T_k}p_kV(x\kp, x_k^t) \ge p_kV(x\kp, \hx_k)\ge \frac{p_k}{2}\|x\kp - \hx_k\|^2.
	\end{align}
	Applying the above two observations, the bound of $W$ in \eqref{eq:Wbound}, the parameter assumption \eqref{eq:cond_Aest}, and recalling that $x_{-1}=\hx_0$ in Algorithm \ref{alg:PDS_SPP} we have
	\begin{align}
	A \ge & \tsum_{k=1}^{N}\left[\beta_k(\tau_k+1)W(y_k,y) - \beta_k\tau_kW(y\kp, y)\right. 
	\\
	& \qquad + \beta_k\lambda_k\langle x_{k-2} - \hx\kp, y\kp - y\rangle - \beta_k\langle x\kp - \hx_k, y_k - y\rangle 
	\\
	& \qquad \left.+ \beta_k\lambda_k\langle x_{k-2} - \hx\kp, y_k - y\kp\rangle + \tfrac{\beta_k p_k}{2}\|x\kp - \hx_k\|^2 + \tfrac{\beta_k\tau_k}{2\tL}\|y\kp - y_k\|_*^2\right]
	\\
	\ge & \beta_N(\tau_N+1)W(y_N,y) - \beta_{N} \langle x_{N-1} - \hx_N, y_N - y\rangle + \tfrac{\beta_Np_N}{2}\|x_{N-1} - \hx_{N}\|^2
	\\
	& + \tsum_{k=2}^{N}\left[\beta_k\lambda_k\|x_{k-2} - \hx\kp\|\|y_k - y\kp\|_*\right.\\
	& \qquad 
	+ \left.\tfrac{\beta\kp p\kp}{2}\|x_{k-2} - \hx\kp\|^2 + \tfrac{\beta_k\tau_k}{2\tL}\|y\kp - y_k\|_*^2\right]
	\\
	\ge & \beta_N(\tau_N+1)W(y_N,y) - \beta_{N} \langle x_{N-1} - \hx_N, y_N - y\rangle + \tfrac{\beta_Np_N}{2}\|x_{N-1} - \hx_{N}\|^2,
	\end{align}
	where the last inequality follows from the simple relation $a^2 + b^2\ge 2ab$ and
	\begin{align}
	(\beta_k\lambda_k)^2 - \beta\kp p\kp({\beta_k\tau_k}/{\tL}) = ({\beta_k^2\lambda_k}/{\tL})(\tL\lambda_k - p\kp \tau_k) \le 0.
	\end{align}
\end{proof}


There are cases when $v_k$ is not equal to $y_k$, for example, in Algorithm~\ref{alg:SPDS}. The following lemma provide a tight bound for $A$ under this case.


\begin{lem}
	\label{lem:Aest_S}
	Suppose that $y_k$ is defined by \eqref{eq:txk_SPP} and \eqref{eq:yk_SPP} and $v_k = v_k(y_k,\xi_k)$ is an unbiased estimator of $y_k$ with respect to random variable $\xi_k$ such that $\E[\|v_k - y_k\|_*]=0$ and $\E[\|v_k - y_k\|_*^2]\le \sigma_k^2$, where $\|\cdot\|_*$ is the dual norm of $\|\cdot\|$. If the parameters satisfy 
	\begin{align}
		\label{eq:cond_Aest_S}
		\tau_1=0, \beta_k\tau_k\le \beta\kp(\tau\kp+1),
		\beta\kp = \beta_k\lambda_k, \text{ and } 2\tL\lambda_k\le p\kp\tau_k,\ \forall k\ge 2,
	\end{align}
	then we have
	\begin{align}
		\label{eq:Aest_S}
		\E[A] \ge & \E\left[- \beta_{N} \langle x_{N-1} - \hx_N, y_N - y\rangle + \tfrac{\beta_Np_N}{4}\|x_{N-1} - \hx_N\|^2 - \tsum_{k=1}^{N}\frac{\beta_k \sigma_k^2}{p_k}
		\right.
		\\
		& \qquad+ \beta_N(\tau_N+1)W(y_N,y) 
		\left.\vphantom{\tsum_{t=1}^{T_k}}\right].
	\end{align}
\end{lem}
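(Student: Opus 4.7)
The plan is to reduce Lemma \ref{lem:Aest_S} to the deterministic Lemma \ref{lem:Aest} by peeling off the stochastic noise additively, rather than redoing the argument from scratch. Set $E_k := v_k - y_k$, so that conditioning on the sigma-algebra $\mathcal{F}_{k-1}$ generated by the iterates through iteration $k-1$ gives $\E[E_k\mid \mathcal{F}_{k-1}]=0$ and $\E[\|E_k\|_*^2\mid \mathcal{F}_{k-1}]\le \sigma_k^2$. Because $v_k$ enters $A$ only through the inner product $\langle v_k, \hat x_k - x\rangle$, we have the additive decomposition
\begin{align}
	A = A^{\det} + \tsum_{k=1}^N \beta_k \langle E_k, \hat x_k - x\rangle,
\end{align}
where $A^{\det}$ denotes the expression obtained from \eqref{eq:A} by replacing $v_k$ with $y_k$ throughout.

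For $A^{\det}$ I would rerun the proof of Lemma \ref{lem:Aest}, but exploit the strengthened hypothesis $2\tL\lambda_k\le p_{k-1}\tau_k$ to leave half of the primal prox in reserve. Concretely, bounding the telescope cross term $-\beta_k\lambda_k\langle x_{k-2}-\hat x_{k-1}, y_k-y_{k-1}\rangle$ via Young's inequality with parameter $c=\beta_k\tau_k/\tL$ produces an $\|x_{k-2}-\hat x_{k-1}\|^2$ coefficient of $\beta_{k-1}\lambda_k \tL/(2\tau_k)$, which under the strengthened hypothesis is at most $\beta_{k-1}p_{k-1}/4$, only half of the $\beta_{k-1}p_{k-1}/2$ supplied by the previous iteration's prox via the bound $V(\hat x, x)\ge \|\hat x-x\|^2/2$. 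The companion $\|y_{k-1}-y_k\|_*^2$ coefficient is absorbed exactly as in the deterministic proof via $\tau_k W(y_{k-1},y_k)\ge \tau_k\|y_{k-1}-y_k\|_*^2/(2\tL)$. This leaves a reserve of $\beta_kp_k/4\cdot\|x_{k-1}-\hat x_k\|^2$ at each $k\le N-1$, together with the full $\beta_Np_N/2\cdot\|x_{N-1}-\hat x_N\|^2$ residual at the last iteration, which has no subsequent cross term to cancel against.

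To bound the stochastic sum, decompose $\hat x_k - x = (\hat x_k - x_{k-1}) + (x_{k-1} - x)$. Because $x_{k-1}, x$ are $\mathcal{F}_{k-1}$-measurable, the second piece has zero conditional expectation. For the first piece, Young's inequality with weight $2/p_k$ gives
\begin{align}
	\beta_k \langle E_k, \hat x_k - x_{k-1}\rangle \ge -\frac{\beta_k}{p_k}\|E_k\|_*^2 - \frac{\beta_k p_k}{4}\|\hat x_k - x_{k-1}\|^2,
\end{align}
whose unconditional expectation is at least $-\beta_k \sigma_k^2/p_k - (\beta_kp_k/4)\E\|\hat x_k - x_{k-1}\|^2$. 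The quadratic residual is exactly absorbed by the reserve carved out in the previous paragraph, so after summing the only leftover primal quadratic is $\beta_N p_N/4\cdot\|x_{N-1}-\hat x_N\|^2$, and the total stochastic cost is $-\tsum_k\beta_k\sigma_k^2/p_k$. Together with the surviving boundary pieces $-\beta_N\langle x_{N-1}-\hat x_N, y_N-y\rangle$ and $\beta_N(\tau_N+1)W(y_N,y)$ inherited from $A^{\det}$, this matches \eqref{eq:Aest_S} precisely.

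The main obstacle is tight bookkeeping: one must verify that the two Young's weights ($c=\beta_k\tau_k/\tL$ in the deterministic telescope and $c'=2/p_k$ in the stochastic Young's) simultaneously fit inside the primal prox budget $\beta_{k-1}p_{k-1}/2\cdot\|x_{k-2}-\hat x_{k-1}\|^2$ at every $k$. This is exactly what the strengthened hypothesis $2\tL\lambda_k\le p_{k-1}\tau_k$ secures, and it is what produces the characteristic $p_N/4$ factor in the final bound.
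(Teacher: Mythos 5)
Your proposal is correct and follows essentially the same route as the paper's proof: split $A$ into its deterministic part (rerun Lemma \ref{lem:Aest} under the strengthened condition $2\tL\lambda_k\le p_{k-1}\tau_k$, which halves the surviving prox coefficient to $\beta_Np_N/4$) plus the noise term $\sum_{k}\beta_k\langle v_k-y_k,\hx_k-x\rangle$, which is handled exactly as in the paper via conditional unbiasedness of the $\langle v_k-y_k,x_{k-1}-x\rangle$ piece and a Young's inequality with weight proportional to $p_k$, producing the $-\sum_k\beta_k\sigma_k^2/p_k$ term. The only difference is bookkeeping: the paper reserves half of each prox term $\tfrac{p_k}{2}V(x_{k-1},x_k^t)$ inside $A_2$ and absorbs the noise quadratic per inner iterate $x_k^t$, whereas you absorb it at the level of the averaged iterate $\hx_k$; the two accountings are equivalent.
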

\begin{proof}
	In view of the optimality condition described previously in \eqref{eq:oc_yk} and the definition of $A$ in \eqref{eq:A} we have
	$
		A \ge A_1 + A_2
	$
	where
	\begin{align}
		A_1 := & \tsum_{k=1}^{N}\beta_k\left[- \langle \hx_k, y\rangle + \langle y_k, \hx_k\rangle + \langle -\tx_k, y_k - y\rangle + \tfrac{p_k}{2T_k}\tsum_{t=1}^{T_k}V(x\kp, x_k^t)\right.
		\\
		& \qquad\qquad \left.\vphantom{\tsum_{t=1}^{T_k}}
		+ \tau_kW(y\kp, y_k) + (\tau_k+1)W(y_k,y) - \tau_kW(y\kp, y)\right]\text{ and }
		\\
		A_2 := & \tsum_{k=1}^{N}\tfrac{\beta_k}{T_k} \tsum_{t=1}^{T_k}\left[ \tfrac{p_k}{2}V(x\kp, x_k^t) + \langle v_k - y_k, x_k^t - x\kp\rangle + \langle v_k - y_k, x\kp - x\rangle\right].
	\end{align}
	Applying the same argument as in Lemma \ref{lem:Aest} to $A_1$, we have 
	\begin{align}
		A_1 \ge & - \beta_{N} \langle x_{N-1} - \hx_N, y_N - y\rangle + \tfrac{\beta_Np_N}{4}\|x_{N-1} - \hx_{N}\|^2  + \beta_N(\tau_N+1)W(y_N,y) .
	\end{align}
	To finish the proof it suffices to show that $\E\left[A_2\right] \ge -\sum_{k=1}^{N}{\beta_k \sigma_k^2}/{p_k}$. Noting from the definition of $v_k$ that
	$
	\E_{\xi_1,\ldots,\xi_{k-1}}\left[\langle v_k - y_k, x\kp - x\rangle\right] = 0
	$ and applying the bound of $V$ in \eqref{eq:UVbound} and Cauchy-Schwartz inequality to the description of $A_2$ we have
	\begin{align}
		\E\left[A_2\right] 
		\ge & \E\left[\tsum_{k=1}^{N}\tfrac{\beta_k}{T_k} \sum_{t=1}^{T_k} \left(\frac{p_k}{4}\|x\kp - x_k^t\|^2 + \| v_k - y_k\|_* \|x_k^t - x\kp\|\right)\right]
		\\
		\ge & \E\left[\tsum_{k=1}^{N}\tfrac{\beta_k}{T_k} \sum_{t=1}^{T_k}-\frac{1}{p_k}\|v_k - y_k\|_*^2\right] = -\tsum_{k=1}^{N}\tfrac{\beta_k \sigma_k^2}{p_k}.
	\end{align}
\end{proof}

\vgap 

We continue with the estimates of $B$, $C$, and $D$ in the following two lemmas.

\vgap 

\begin{lem}
	\label{lem:Best}
	Suppose that
	\begin{align}
	\label{eq:cond_Best_t}
	& \alpha_k^t=1,\ \|\cA\|^2\le\eta_k\tp q_k^t,\ \forall t\ge 2, k\ge 1,\text{ and }
	\\
	\label{eq:cond_Best_k}
	& \beta_k T\kp\alpha_k^1= \beta\kp T_k,\ \alpha_k^1 \|\cA\|^2\le \eta\kp^{T\kp}q_k^{1},\ \forall k\ge 2,
	\end{align}
	where $\|\cA\|$ is the norm induced by norms $\|\cdot\|$ and $|\cdot|$ introduced in \eqref{eq:UVbound}. Then 
	\begin{align}
	\label{eq:Best}
	B \ge & \tfrac{\beta_N}{T_N}\|\cA\| |z_{N} - z|\| x_N - x_N^{T_N-1}\| + \tfrac{\beta_N\eta_N^{T_N}}{2T_N}\|x_N^{T_N-1} - x_N^{T_N}\|^2.
	\end{align}
\end{lem}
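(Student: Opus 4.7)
The plan is to rewrite each cross term $\langle \cA^\top z_k^t - \cA^\top z, x_k^t - \tu_k^t\rangle$ appearing in $B$ so as to expose a telescoping piece, and then absorb what remains into the prox terms $q_k^t U(z_k^{t-1}, z_k^t)$ and $\eta_k^t V(x_k^{t-1}, x_k^t)$ via Cauchy--Schwarz and Young's inequality. Using the definition $\tu_k^t = x_k\tp + \alpha_k^t(x_k\tp - x_k^{t-2})$, I would write
\[
\langle \cA^\top(z_k^t - z), x_k^t - \tu_k^t\rangle = \Gamma_k^t - \alpha_k^t \Gamma_k^{t-1} - \alpha_k^t \langle z_k^t - z_k\tp, \cA(x_k\tp - x_k^{t-2})\rangle,
\]
where $\Gamma_k^t := \langle z_k^t - z, \cA(x_k^t - x_k\tp)\rangle$ is the piece to be telescoped.

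For the residual cross term I would apply Cauchy--Schwarz followed by Young's inequality:
\[
\alpha_k^t \|\cA\|\,|z_k^t - z_k\tp|\,\|x_k\tp - x_k^{t-2}\| \le \tfrac{q_k^t}{2}|z_k^t - z_k\tp|^2 + \tfrac{(\alpha_k^t)^2\|\cA\|^2}{2q_k^t}\|x_k\tp - x_k^{t-2}\|^2.
\]
The first summand is dominated by $q_k^t U(z_k\tp,z_k^t)$ via \eqref{eq:UVbound}. For $t\ge 2$, the hypotheses $\alpha_k^t = 1$ and $\|\cA\|^2 \le \eta_k\tp q_k^t$ let the second summand be dominated by $\eta_k\tp V(x_k^{t-2},x_k\tp)$. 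For $t = 1$ with $k \ge 2$, the identities $x_k^0 = x\kp^{T\kp}$ and $x_k^{-1} = x\kp^{T\kp-1}$ together with the weight matching $\beta_k \alpha_k^1/T_k = \beta\kp/T\kp$ and the bound $\alpha_k^1\|\cA\|^2 \le \eta\kp^{T\kp}q_k^1$ let this residual be absorbed by the $V(x\kp^{T\kp-1}, x\kp^{T\kp})$ term left over from the end of the previous outer iteration; for $k=1$, $x_1^{-1} = x_0 = x_1^0$ and the residual simply vanishes.

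After all residuals are absorbed, the telescoping sum $\sum_{k,t}\tfrac{\beta_k}{T_k}(\Gamma_k^t - \alpha_k^t \Gamma_k^{t-1})$ collapses: within each outer iteration the choice $\alpha_k^t = 1$ for $t \ge 2$ gives an inner telescope, and between outer iterations the relation $\beta_k T\kp\alpha_k^1 = \beta\kp T_k$ ensures $\tfrac{\beta_k}{T_k}\alpha_k^1 \Gamma_k^0 = \tfrac{\beta\kp}{T\kp}\Gamma\kp^{T\kp}$ and cancels the last surviving term from iteration $k-1$. Only the very last contribution $\tfrac{\beta_N}{T_N}\Gamma_N^{T_N}$ survives, which by Cauchy--Schwarz is bounded below by $-\tfrac{\beta_N}{T_N}\|\cA\||z_N - z|\|x_N - x_N^{T_N-1}\|$, and the $V(x_N^{T_N-1},x_N^{T_N})$ term at $k=N,\,t=T_N$ is never consumed and yields the positive quadratic in the stated bound (up to a sign on the cross term, which reads as a lower bound on $B$).

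The main obstacle is the careful bookkeeping at the junction between consecutive outer iterations. Specifically, the $V$-term with coefficient $\eta\kp^{T\kp}$ at $t = T\kp$ of outer iteration $k-1$ is not consumed by any cross term inside iteration $k-1$ (the inner Young residual at $t = T\kp$ uses the preceding $\eta\kp^{T\kp-1}$), and instead must be saved to absorb the Young residual from the $t=1$ step of outer iteration $k$. Verifying that the conditions $\beta_k T\kp\alpha_k^1 = \beta\kp T_k$ and $\alpha_k^1\|\cA\|^2 \le \eta\kp^{T\kp}q_k^1$ are precisely what is needed to align coefficients across this junction is the key balancing step; once it is in place, the remainder of the argument reduces to a routine telescoping sum.
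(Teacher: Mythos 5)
Your proposal follows the paper's own proof essentially step for step: the same decomposition of $\langle \cA^\top(z_k^t-z), x_k^t-\tu_k^t\rangle$ into a telescoping piece plus the residual $-\alpha_k^t\langle z_k^t-z_k^{t-1},\cA(x_k^{t-1}-x_k^{t-2})\rangle$, the same Cauchy--Schwarz/Young absorption of the residuals into the $q_k^t$-quadratic and the previous step's $\eta$-quadratic via \eqref{eq:cond_Best_t}, and the same handling of the junction between outer iterations (using $x_k^0=x_{k-1}^{T_{k-1}}$, $x_k^{-1}=x_{k-1}^{T_{k-1}-1}$, $z_k^0=z_{k-1}$ together with \eqref{eq:cond_Best_k}), leaving exactly the surviving terms $\tfrac{\beta_N}{T_N}\langle \cA^\top(z_N-z), x_N-x_N^{T_N-1}\rangle$ and $\tfrac{\beta_N\eta_N^{T_N}}{2T_N}\|x_N^{T_N-1}-x_N^{T_N}\|^2$. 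Your parenthetical sign remark is also correct: the cross term is only bounded below by $-\tfrac{\beta_N}{T_N}\|\cA\|\,|z_N-z|\,\|x_N-x_N^{T_N-1}\|$, which is the form actually needed (and used, via $\eta_N^{T_N}q_N^{T_N}\ge\|\cA\|^2$) in the proof of Proposition \ref{pro:Qest_proof}, so the plus sign in \eqref{eq:Best} should be read as a typo.
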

\begin{proof}
	By the definition of $\tu_k^t$ in \eqref{eq:tukt} and bounds of $U$ and $V$ in \eqref{eq:UVbound} we have
	\begin{align}
	& \tsum_{t=1}^{T_k}\left[\langle \cA^\top z_k^t - \cA^\top z, x_k^t - \tu_k^t\rangle + q_k^tU(z_k\tp, z_k^t)  + \eta_k^tV(x_k\tp, x_k^t)\right]
	\\
	\ge & 
	\tsum_{t=1}^{T_k}\left[- \alpha_k^t \langle \cA^\top z_k\tp - \cA^\top z, x_k\tp - x_k^{t-2}\rangle + \langle \cA^\top z_k^t - \cA^\top z, x_k^t - x_k\tp\rangle \right.
	\\
	&\quad \left.+ \tfrac{q_k^t}{2}\|z_k\tp - z_k^t\|^2 + \tfrac{\eta_k^t}{2}\|x_k\tp - x_k^t\|^2 - \alpha_k^t \langle \cA^\top z_k^t - \cA^\top z_k\tp, x_k\tp - x_k^{t-2}\rangle\right].
	\end{align}
	Also observe that $\alpha_k^t=1$ for all $t\ge 2$ we have
	\begin{align}
		& \tsum_{t=1}^{T_k}- \alpha_k^t \langle \cA^\top z_k\tp - \cA^\top z, x_k\tp - x_k^{t-2}\rangle + \langle \cA^\top z_k^t - \cA^\top z, x_k^t - x_k\tp\rangle 
		\\
		= & -\alpha_k^1\langle \cA^\top z_k^0 - \cA^\top z, x_k^0 - x_k^{-1}\rangle + \langle \cA^\top z_{k}^{T_k} - \cA^\top z, x_k^{T_k} - x_k^{T_k-1}\rangle.
	\end{align}
	In addition, in view of the assumptions \eqref{eq:cond_Best_t} and \eqref{eq:cond_Best_k} we have
	\begin{align}
		& \tsum_{t=1}^{T_k}\left[\frac{q_k^t}{2}\|z_k\tp - z_k^t\|^2 + \frac{\eta_k^t}{2}\|x_k\tp - x_k^t\|^2 - \alpha_k^t \langle \cA^\top z_k^t - \cA^\top z_k\tp, x_k\tp - x_k^{t-2}\rangle\right]
		\\
		=&  \tfrac{q_k^1}{2}\|z_k^0 - z_k^1\|^2 + \tfrac{\eta_k^{T_k}}{2}\|x_k^{T_k-1} - x_k^{T_k}\|^2 - \alpha_k^1 \langle z_k^1 - z_k^0, \cA(x_k^0 - x_k^{-1})\rangle
	\\
	& + \tsum_{t=2}^{T_k} \left[\frac{q_k^t}{2}\|z_k\tp - z_k^t\|^2 + \frac{\eta_k\tp}{2}\|x_k^{t-2} - x_k\tp\|^2 - \langle z_k^t - z_k\tp, \cA(x_k\tp - x_k^{t-2})\rangle\right]
	\\
	\ge & \tfrac{q_k^1}{2}\|z_k^0 - z_k^1\|^2 + \tfrac{\eta_k^{T_k}}{2}\|x_k^{T_k-1} - x_k^{T_k}\|^2 - \alpha_k^1 \|\cA\| |z_k^1 - z_k^0|\| x_k^0 - x_k^{-1}\|.
	\end{align}
	Applying the above three relations to the definition of $B$ in \eqref{eq:B}, recalling the definitions of $x_k^0$, $z_k^0$, $x_k^{-1}$ in Algorithm \ref{alg:PDS_SPP}, and using the assumption \eqref{eq:cond_Best_k} we have
	\begin{align}
	 B\ge & \tsum_{k=1}^N\frac{\beta_k}{T_k}
	\left[\vphantom{\frac{\eta_k^{T_k}}{2}}
	-\alpha_k^1\langle \cA^\top z_k^0 - \cA^\top z, x_k^0 - x_k^{-1}\rangle + \langle \cA^\top z\kn^{0} - \cA^\top z, x\kn^{0} - x\kn^{-1}\rangle
	\right.
	\\
	& \qquad\qquad\left.
	+ \tfrac{q_k^1}{2}\|z_k^0 - z_k^1\|^2 + \tfrac{\eta_k^{T_k}}{2}\|x_k^{T_k-1} - x_k^{T_k}\|^2 - \alpha_k^1 \|\cA\| |z_k^1 - z_k^0|\| x_k^0 - x_k^{-1}\| 
	\right]
	\\
	= & \tfrac{\beta_N}{T_N}\langle \cA^\top z_{N} - \cA^\top z, x_N - x_N^{T_N-1}\rangle
	\\
	& + \tfrac{\beta_1q_1^1}{2T_k}\|z_1^0 - z_1^1\|^2 + \tfrac{\beta_N\eta_N^{T_N}}{2T_N}\|x_N^{T_N-1} - x_N^{T_N}\|^2
	\\
	& + \sum_{k=2}^N\left[ \tfrac{\beta_kq_k^1}{2T_k}\|z_k^0 - z_k^1\|^2 + \tfrac{\beta\kp\eta\kp^{T\kp}}{2T\kp}\|x_k^{-1} - x_k^{0}\|^2 -\tfrac{\beta_k\alpha_k^1}{T_k} \|\cA\| |z_k^1 - z_k^0|\| x_k^0 - x_k^{-1}\|\right]
	\\
	\ge & \tfrac{\beta_N}{T_N}\|\cA\| |z_{N} - z|\| x_N - x_N^{T_N-1}\| + \tfrac{\beta_N\eta_N^{T_N}}{2T_N}\|x_N^{T_N-1} - x_N^{T_N}\|^2.
	\end{align}
	Here in the last inequality we use the following result from the assumption \eqref{eq:cond_Best_k}:
	\begin{align}
	& (\beta_k\alpha_k^1\|\cA\|/T_k)^2 - (\beta_kq_k^1/T_k)(\beta\kp\eta\kp^{T\kp} /T\kp) 
	\\
	= & (\beta_k/T_k)^2\alpha_k^1(\alpha_k^1\|\cA\|^2 - q_k^1\eta\kp^{T\kp})\ge 0.
	\end{align}
\end{proof}

\vgap

\begin{lem}
	\label{lem:CDest}
	If
	\begin{align}
	\label{eq:cond_Cest}
	q_k^t \le q_k\tp,\ \forall t\ge 2, k\ge 1\text{ and }\beta_k T\kp q_k^1\le \beta\kp T_k q\kp^{T\kp},\ \forall k\ge 2,
	\end{align}
	then $C$ in \eqref{eq:C} can be bounded by
	\begin{align}
	\label{eq:Cest}
	\begin{aligned}
	C \le & \tfrac{\beta_1q_1^1}{T_1}U(z_0, z) - \tfrac{\beta_Nq_N^{T_N}}{T_N}U(z_N,z).
	\end{aligned}
	\end{align}
	Also, if 	
	\begin{align}
	\label{eq:cond_Dest}
	\begin{aligned}
	& \eta_k^t\le \mu+\eta_k\tp+p_k,\ \forall t\ge 2, k\ge 1 \text{ and }
	\\
	& \beta_k T\kp (\eta_k^1 + p_k T_k)\le \beta\kp T_k(\mu+\eta\kp^{T\kp}+p\kp),\ \forall k\ge 2,
	\end{aligned}
	\end{align}
	then $D$ in \eqref{eq:C} can be bounded by
	\begin{align}
	\label{eq:Dest}
	\begin{aligned}
	D \le & \tfrac{\beta_1}{T_1}(\eta_1^1 + p_1T_1)V(x_0, x) - \tfrac{\beta_N}{T_N}(\mu+\eta_N^{T_N}+p_N)V(x_N,x).
	\end{aligned}
	\end{align}
\end{lem}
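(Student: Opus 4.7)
The proof is a two-level telescoping argument: I telescope first within each outer iteration $k$ over the inner index $t$, and then across the outer iterations $k=1,\ldots,N$. The two hypotheses in \eqref{eq:cond_Cest} (respectively \eqref{eq:cond_Dest}) are exactly what is needed so that each of the two telescopings produces a non-positive residual that can be discarded.

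For the bound on $C$, I would rewrite the inner sum for fixed $k$ by a standard Abel-type rearrangement
\begin{align*}
\sum_{t=1}^{T_k} q_k^t\bigl[U(z_k\tp,z) - U(z_k^t,z)\bigr] = q_k^1 U(z_k^0, z) - q_k^{T_k} U(z_k^{T_k}, z) + \sum_{t=1}^{T_k-1}(q_k\tn - q_k^t)U(z_k^t, z),
\end{align*}
and drop the last sum using $q_k\tn \le q_k^t$ (the first hypothesis in \eqref{eq:cond_Cest}) together with $U \ge 0$ from \eqref{eq:UVbound}. Substituting $z_k^0 = z\kp$ and $z_k^{T_k} = z_k$ per Algorithm \ref{alg:PDS_SPP}, multiplying by $\beta_k/T_k$, and summing over $k$ leaves a telescoping in $k$ in which the net coefficient of each intermediate $U(z_k,z)$ is $\beta\kn q\kn^1/T\kn - \beta_k q_k^{T_k}/T_k \le 0$ by the second hypothesis in \eqref{eq:cond_Cest} (applied with $k$ there replaced by $k+1$). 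Only the two boundary terms $\beta_1 q_1^1 U(z_0,z)/T_1$ and $-\beta_N q_N^{T_N}U(z_N,z)/T_N$ survive, giving \eqref{eq:Cest}.

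For $D$, the same recipe applies with two small bookkeeping adjustments. First, the $t$-independent term $p_k V(x\kp,x)$ sums over the inner loop to $T_k p_k V(x\kp,x)$, which I absorb into the starting coefficient for $V(x\kp,x)$. Second, the inner Abel rearrangement now yields the residual $\sum_{t=1}^{T_k-1}\bigl[\eta_k\tn - (\mu+\eta_k^t+p_k)\bigr]V(x_k^t,x)$, which is non-positive by the first part of \eqref{eq:cond_Dest} combined with $V\ge 0$. This leaves, for each $k$, the bound $(\eta_k^1 + T_k p_k)V(x\kp,x) - (\mu+\eta_k^{T_k}+p_k)V(x_k,x)$; multiplying by $\beta_k/T_k$ and summing, the outer telescoping governed by the second part of \eqref{eq:cond_Dest} then leaves exactly the two boundary terms in \eqref{eq:Dest}. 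The only point that I expect to require care is the index shift between how the hypotheses are phrased (comparing iteration $k-1$ to iteration $k$) and how they are applied inside the outer sum (comparing iteration $k$ to iteration $k+1$); apart from that bookkeeping, the argument is routine algebra and I do not expect any real obstacle. I would present the $C$-bound in full detail and then indicate that the $D$-bound follows by an essentially identical computation with the $\mu + p_k$ offset accommodated as above.
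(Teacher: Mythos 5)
Your proposal is correct and follows essentially the same route as the paper: an inner Abel/telescoping step over $t$ whose residual is discarded via $q_k^t \le q_k^{t-1}$ (resp. $\eta_k^t\le \mu+\eta_k^{t-1}+p_k$) and $U,V\ge 0$, followed by an outer telescoping over $k$ using the second halves of \eqref{eq:cond_Cest} and \eqref{eq:cond_Dest}, with the $p_kV(x_{k-1},x)$ term absorbed into the coefficient $\eta_k^1+p_kT_k$. The index-shift bookkeeping you flag is indeed the only subtlety and is handled exactly as you describe.
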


\begin{proof}
	In view of \eqref{eq:cond_Cest} and the fact that $z_k^0 = z\kp$ and $z_k = z_k^{T_k}$, we have:
	\begin{align}
	C \le & \tsum_{k=1}^{N}\left[\frac{\beta_k q_k^1}{T_k}U(z\kp, z) - \frac{\beta_k q_k^{T_k}}{T_k}U(z_k,z)\right] \le \frac{\beta_1q_1^1}{T_1}U(z_0, z) - \frac{\beta_Nq_N^{T_N}}{T_N}U(z_N,z).
	\end{align}
	Similarly, by \eqref{eq:cond_Dest} and noting that $x_k^0 = x\kp$ and $x_k = x_k^{T_k}$ we have
	\begin{align}
	D \le & \tsum_{k=1}^{N}\tfrac{\beta_k}{T_k}\left[\eta_k^1 V(x_k^0, x) - (\mu+\eta_k^{T_k}+p_k)V(x_k^{T_k}, x) + p_kT_kV(x\kp, x)\right]
	\\
	= & \tsum_{k=1}^{N}\left[\frac{\beta_k}{T_k}(\eta_k^1 + p_kT_k) V(x\kp, x) - \frac{\beta_k}{T_k}(\mu+\eta_k^{T_k}+p_k)V(x_k, x)\right]
	\\
	\le & \tfrac{\beta_1}{T_1}(\eta_1^1 + p_1T_1)V(x_0, x) - \tfrac{\beta_N}{T_N}(\mu+\eta_N^{T_N}+p_N)V(x_N,x).
	\end{align}
\end{proof}

\vgap

With Lemmas \ref{lem:Aest}, \ref{lem:Best}, and \ref{lem:CDest} and Proposition \ref{pro:Qest_raw}, we are ready to prove Proposition \ref{pro:Qest_proof}. 

 \vgap

\begin{proof}[Proof of Proposition \ref{pro:Qest_proof}]
	Note that the conditions \eqref{eq:cond_Qest_k} and \eqref{eq:cond_Qest_t} for parameters in Algorithm \ref{alg:PDS_SPP} are exactly the parameter assumptions \eqref{eq:cond_Aest}, \eqref{eq:cond_Best_t} -- \eqref{eq:cond_Dest}, 
	in Lemmas \ref{lem:Aest}, \ref{lem:Best}, and \ref{lem:CDest}.
	In view of the definition of $\wu_N$, the convexity of $Q$, Proposition \ref{pro:Qest_raw} and Lemmas \ref{lem:Aest}, \ref{lem:Best}, and \ref{lem:CDest}, we have
	\begin{align}
		& \left(\tsum_{k=1}^{N}\beta_k\right)Q(\wu_N, w) \le \tsum_{k=1}^{N}\beta_kQ(\hat w_k, w)
		\\
		\le &  \beta_{N} \langle x_{N-1} - \hx_N, y_N - y\rangle - \tfrac{\beta_Np_N}{2}\|x_{N-1} - \hx_{N}\|^2  - \beta_N(\tau_N+1)W(y_N,y)
		\\
		& - \tfrac{\beta_N}{T_N}\|\cA\| |z_{N} - z|\| x_N - x_N^{T_N-1}\| - \tfrac{\beta_N\eta_N^{T_N}}{2T_N}\|x_N^{T_N-1} - x_N^{T_N}\|^2
		\\
		& + \tfrac{\beta_1q_1^1}{T_1}U(z_0, z) - \tfrac{\beta_Nq_N^{T_N}}{T_N}U(z_N,z)
		\\
		& + \tfrac{\beta_1}{T_1}(\eta_1^1 + p_1T_1)V(x_0, x) - \tfrac{\beta_N}{T_N}(\mu+\eta_N^{T_N}+p_N)V(x_N,x)
		.
	\end{align}	
	We conclude \eqref{eq:Qest_proof} from the above inequality by noting from \eqref{eq:cond_Qest_N}, \eqref{eq:UVbound}, and \eqref{eq:Wbound} that
	\begin{align}
		& \begin{aligned}
		& \beta_{N} \langle x_{N-1} - \hx_N, y_N - y\rangle - \tfrac{\beta_Np_N}{2}\|x_{N-1} - \hx_{N}\|^2  - \beta_N(\tau_N+1)W(y_N,y)
		\\
		\le & \tfrac{2\beta_N}{p_N}\|y_N - y\|^2 - \tfrac{\beta_N(\tau_N+1)}{2\tL}\|y_N - y\|^2 \le 0, \text{ and }
		\\
		\end{aligned}
	\\
	& \begin{aligned}
		& - \tfrac{\beta_N}{T_N}\|\cA\| |z_{N} - z|\| x_N - x_N^{T_N-1}\| - \tfrac{\beta_N\eta_N^{T_N}}{2T_N}\|x_N^{T_N-1} - x_N^{T_N}\|^2 - \tfrac{\beta_Nq_N^{T_N}}{T_N}U(z_N,z)
		\\
		\le & \tfrac{\beta_Nq_N^{T_N}}{2T_N}|z_N - z|^2 - \tfrac{\beta_Nq_N^{T_N}}{2T_N}|z_N - z|^2
		= 0.
	\end{aligned}
	\end{align}

	In the special case when $\cZ$ is a vector space with Euclidean norm $|\cdot|=\|\cdot\|_2$, the prox-function $U(\cdot,\cdot)=\|\cdot-\cdot\|_2^2/2$, and $h(z) = \langle b, z\rangle$, let us choose any saddle point $w^*=(x^*,y^*,z^*)$ that solves the SPP \eqref{eq:SPP_general_with_conjugate} and denote
	\begin{align}
		\label{eq:wustar}
		\wu^*_N:=(x^*, y^*, \zu_N^*)\text{ where }\zu_N^*:=\tfrac{\|z^*\|_2+1}{\|\cA\xu_N-b\|_2}(A\xu_N-b).
	\end{align}
	Note that in such special case the SPP \eqref{eq:SPP_general_with_conjugate} reduces to the linear constrained optimization \eqref{eq:problem_lco}, in which $x^*$ is an optimal solution, $y^*=\nabla \tf(x^*)$, and $z^*$ is the optimal Lagrange multiplier associated with the equality constraints. Choosing $w=(x^*,\nabla \tf(\xu_N),0)$ and noting that $Ax^*=b$, $\tf(\xu_N) = \langle \xu_N, \nabla \tf(\xu_N)\rangle - \tf^*(\nabla \tf(\xu_N))$, and $\tf(x^*)\ge \langle x^*, \yu_N\rangle - \tf^*(\yu_N) $, by the definition of gap function $Q$ in \eqref{eq:Qhwk_SPP} we have
	\begin{align}
		Q(\wu_N, w) = & \mu\nu(\xu_N) + \langle \xu_N, \nabla \tf(\xu_N)\rangle - \tf^*(\nabla \tf(\xu_N)) - \mu\nu(x^*) - \langle x^*, \yu_N\rangle + \tf^*(\yu_N) 
		\\
		\ge & \mu\nu(\xu_N) + \tf(\xu_N) - \mu\nu(x^*) - \tf(x^*) = f(\xu_N) - f(x^*).
	\end{align}
	Combing the above result with \eqref{eq:Qest_proof} we obtain \eqref{eq:fest_proof} immediately. To prove \eqref{eq:Axbest_proof} let us study the gap functions involving $\wu_N$, $w^*$, and $\wu^*_N$. Observe that
	\begin{align}
		Q(\wu_N, w^*) = & \left[\mu\nu(\xu_N) + \langle \xu_N, y^*+\cA^\top  z^*\rangle - \tf^*(y^*) - \langle b, z^*\rangle\right] 
		\\
		& - \left[\mu\nu(x^*) + \langle x^*, \yu_N+\cA^\top \zu_N\rangle - \tf^*(\yu_N) - \langle b, \zu_N\rangle\right]
		\\
		\le & \mu\nu(\xu_N) + \langle \xu_N, y^*\rangle - \tf^*(y^*) - \mu\nu(x^*) - \langle x^*, \yu_N\rangle + \tf^*(\yu_N) 
		\\
		&+ \| \cA\xu_N -b\|_2\| z^*\|_2.
	\end{align}
	Here also observe that $Q(\wu_N, w^*) \ge 0$ since $w^*$ is a saddle point. By these observations and the definition of $\zu_N^*$ in \eqref{eq:wustar} we have
	\begin{align}
		 Q(\wu_N, \wu_N^*) = & \left[\mu\nu(\xu_N) + \langle \xu_N, y^*+\cA^\top \zu_N^*\rangle - \tf^*(y^*) - \langle b, \zu_N^*\rangle\right] 
		\\
		& - \left[\mu\nu(x^*) + \langle x^*, \yu_N+\cA^\top \zu_N\rangle - \tf^*(\yu_N) - \langle b, \zu_N\rangle\right]
		\\
		= & \mu\nu(\xu_N) + \langle \xu_N, y^*\rangle - \tf^*(y^*) - \mu\nu(x^*) - \langle x^*, \yu_N\rangle + \tf^*(\yu_N) 
		\\
		& + (\|z^*\|_2+1)\|\cA\xu_N -b\|_2
		\\
		\ge & Q(\wu_N, w^*) + \|\cA\xu_N - b\|_2\ge \|\cA\xu_N - b\|_2.
	\end{align}
	The above result, together with the bound of $Q$ in \eqref{eq:Qest_proof}, and the facts that $y^*=\nabla \tf(x^*)$, $z_0=0$, and 
	$
		\|z_0 - \zu_N^*\|_2^2 = \|\zu_N^*\|_2^2 = (\|z^*\|_2+1)^2,
	$
	yield the estimate \eqref{eq:Axbest_proof}.
\end{proof}

\vgap

Note that Proposition \ref{pro:Qest} is in fact a special case of Proposition \ref{pro:Qest_proof}. Indeed, since Algorithm \ref{alg:PDS_agent} is the agent view of Algorithm \ref{alg:PDS}, which is a special case of Algorithm \ref{alg:PDS_SPP} (when $h(z)\equiv 0$, $\cZ =\R^{md}$, $z_0=0$, $\cX$, $f$, and $\cA$ are defined in \eqref{eq:problem_decen}, and all norms are Euclidean). Therefore, the results in Proposition~\ref{pro:Qest} are immediately implied from Proposition \ref{pro:Qest_proof}.
%

Applying the results in Lemmas \ref{lem:Aest_S}, \ref{lem:Best}, and \ref{lem:CDest} to Proposition \ref{pro:Qest_raw} we can also prove Proposition \ref{pro:Qest_S}. The proof is similar to that of Proposition \ref{pro:Qest_proof} above.

\vgap

\begin{proof}[Proof of Proposition \ref{pro:Qest_S}]
	Denoting $\wu_N:=\left(\sum_{k=1}^{N}\beta_k\right)^{-1}\left(\sum_{k=1}^N\beta_k\hw_k\right)$, let us study the gap function $Q$ defined in \eqref{eq:Qhwk_SPP} in which $h(\cdot)\equiv 0$. Noting the convexity of $Q$, applying Proposition \ref{pro:Qest_raw} (in which $U(\cdot,\cdot) = \|\cdot-\cdot\|_2^2/2$, $V(\cdot,\cdot)=\|\cdot-\cdot\|_2^2/2$, and all norms are Euclidean) and the bounds of $A$ through $D$ in Lemmas \ref{lem:Aest_S} (with $\sigma_k=\sigma/\sqrt{c_k}$ as specified in Algorithm \ref{alg:SPDS}), \ref{lem:Best}, and \ref{lem:CDest} (in which $z_0=0$ from the description of Algorithm \ref{alg:SPDS_agent}), we have
	\begin{align}
		& \left(\tsum_{k=1}^{N}\beta_k\right)\E[Q(\wu_N, w)] \le \tsum_{k=1}^{N}\beta_k\E[Q(\hat w_k, w)]
		\\
		\le & 
		\E\left[
		\beta_{N} \langle x_{N-1} - \hx_N, y_N - y\rangle - \tfrac{\beta_Np_N}{4}\|x_{N-1} - \hx_N\|_2^2 + \tsum_{k=1}^{N}\tfrac{\beta_k \sigma^2}{p_kc_k}
		\right.
		\\
		& \qquad- \beta_N(\tau_N+1)W(y_N,y) 
		\\
		& -\tfrac{\beta_N}{T_N}\|\cA\| \|z_{N} - z\|_2\| x_N - x_N^{T_N-1}\|_2 - \tfrac{\beta_N\eta_N^{T_N}}{2T_N}\|x_N^{T_N-1} - x_N^{T_N}\|_2^2
		\\
		& + \tfrac{\beta_1q_1^1}{2T_1}\|z\|_2^2 - \tfrac{\beta_Nq_N^{T_N}}{2T_N}\|z_N - z\|_2^2
		\\
		& 
		\left.
		+ \tfrac{\beta_1}{2T_1}(\eta_1^1 + p_1T_1)\|x_0 - x\|_2^2 - \tfrac{\beta_N}{2T_N}(\mu + \eta_N^{T_N}+p_N)\|x_N - x\|_2^2
		\right].
	\end{align}	
	Here noting from \eqref{eq:cond_Qest_N}, \eqref{eq:cond_modified_S}, and \eqref{eq:Wbound} we can simplify the above to
	\begin{align}
		\left(\tsum_{k=1}^{N}\beta_k\right)\E[Q(\wu_N, w)]\le & \tfrac{\beta_1q_1^1}{2T_1}\|z\|_2^2 + \tfrac{\beta_1}{2T_1}(\eta_1^1 + p_1T_1)\|x_0 - x\|_2^2 + \tsum_{k=1}^{N}\tfrac{\beta_k \sigma^2}{p_kc_k}.
	\end{align}
	The remainder of the proof is similar to that of Proposition \ref{pro:Qest_proof} and hence is skipped.
\end{proof}

\section{Numerical experiments}
\label{sec:numerical}

In this section, we demonstrate the advantages of our proposed PDS method through some preliminary numerical experiments and compare it with the state-of-the-art communication-efficient decentralized method, namely the decentralized communication sliding (DCS) method proposed in \cite{lan2020communication}. 
We consider a decentralized convex smooth optimization problem of unregularized logistic regression model 
over a dataset that is not linearly separable.
In order to guarantee a fair comparison, all the implementation details described below are the same as suggested in \cite{lan2020communication}. In the linear constrained problem formulation \eqref{eq:problem_decen} of the decentralized problem we set $\cA=\cL\times I_d$ from the graph Laplacian. 
For the underlying communication network, we use the Erhos-Renyi algorithm
to generate three connected graphs with $m=100$ nodes as shown in Figure~\ref{network}. Note that nodes with different degrees are drawn in different colors, in particular, $G1$ has a maximum degree of $d_{max}=4$, $G2$ has $d_{max}=9$ and $G3$ has $d_{max}=20$. 
We also use the same dataset as in \cite{lan2020communication}, a real dataset "ijcnn1" from LIBSVM\footnote{This real dataset can be downloaded from \url{https://www.csie.ntu.edu.tw/~cjlin/libsvmtools/datasets/}.} and choose $20,000$ samples from this dataset as our problem instance data to train the decentralized logistic regression model. Since we have $m=100$ nodes (or agents) in the decentralized network, we evenly split these $20,000$ samples over $100$ nodes, and hence each network node has $200$ samples.

\begin{figure}[h]
\begin{minipage}{0.3\textwidth}
\centering
\includegraphics[scale = 0.11]{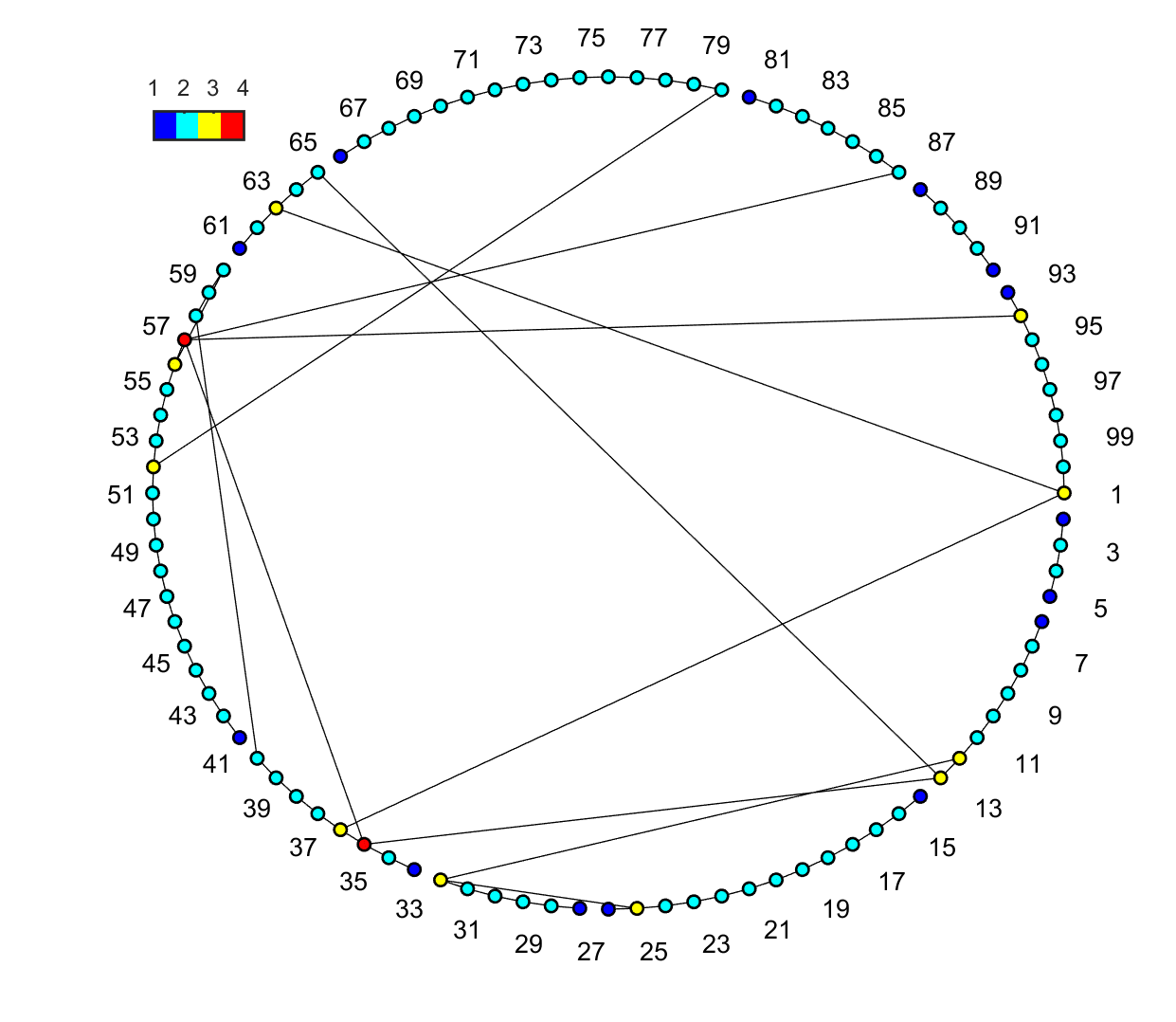}
\end{minipage}
\begin{minipage}{0.32\textwidth}
	\centering
	\includegraphics[scale = 0.15]{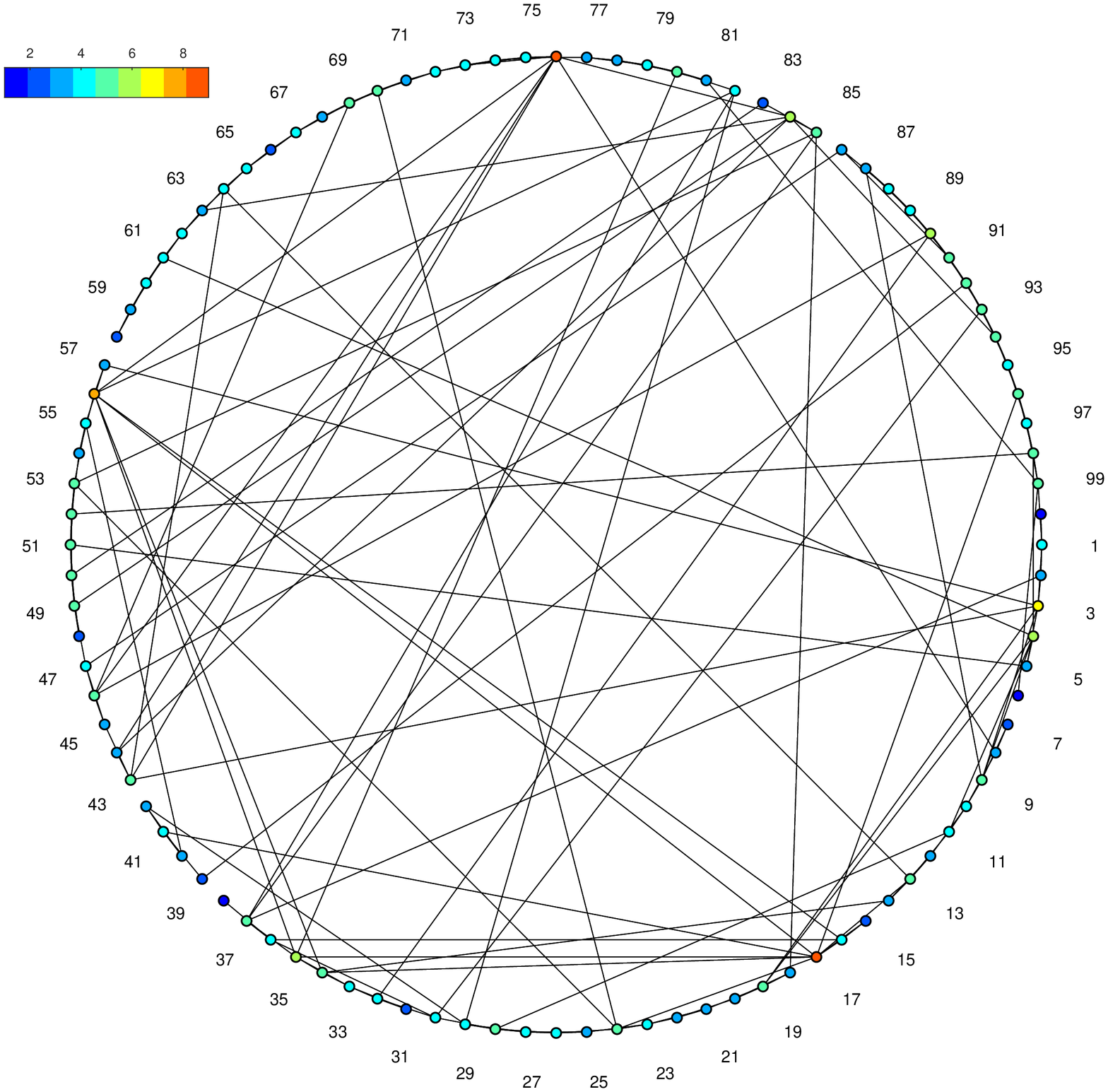}
\end{minipage}
\begin{minipage}{0.33\textwidth}
	\centering
	\includegraphics[scale = 0.12]{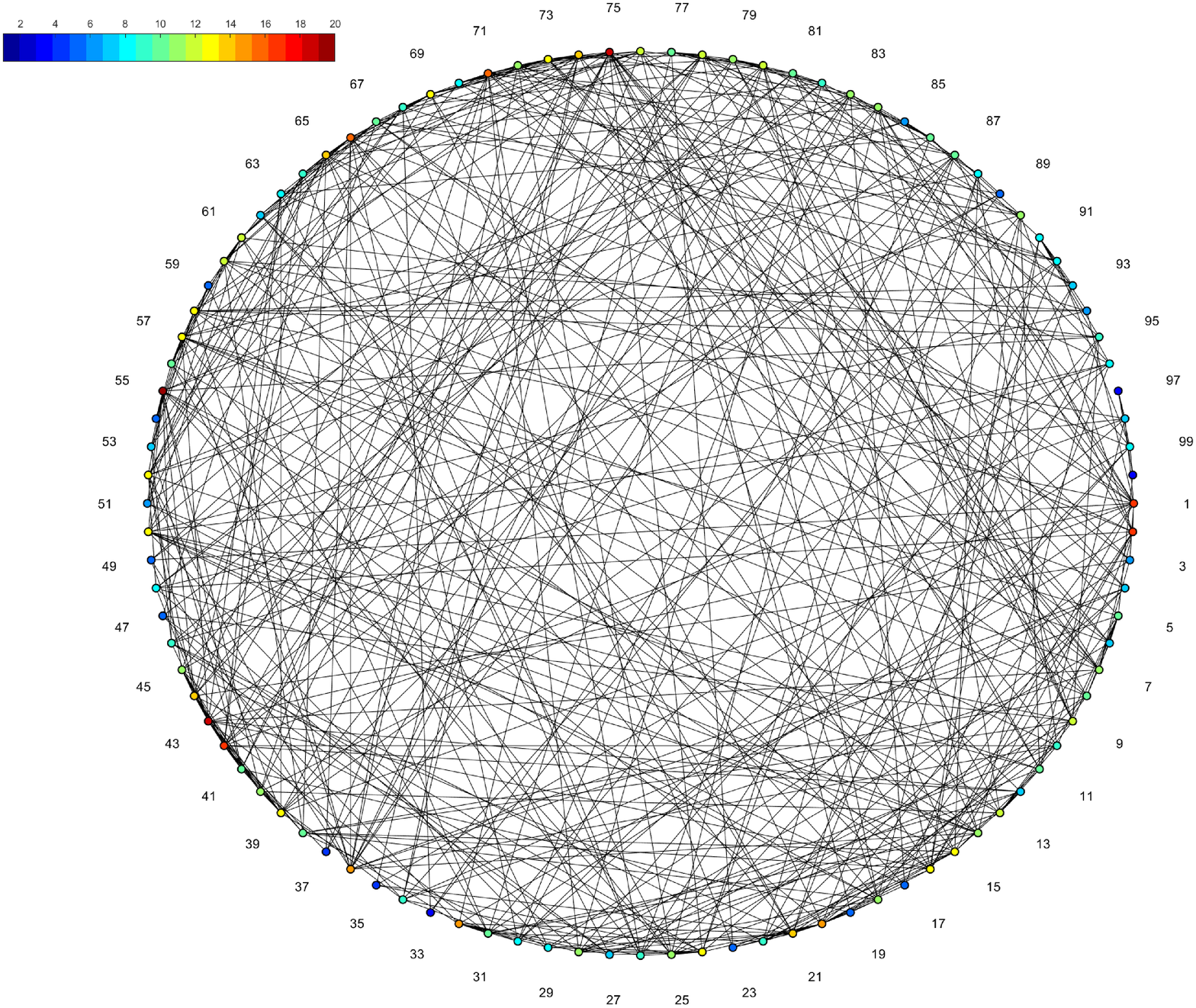}
\end{minipage}
\label{network}
\caption{\small The underlying decentralized networks $G1$, $G2$ and $G3$, from left to right. Nodes with different degrees are drawn in different colors.}
\end{figure}

All agents in the network start with the same initial point $x_0=0$ and $y_0=z_0=0$. We then compare the performances of the proposed PDS/SPDS algorithms with DCS (SDCS for stochastic case) proposed in \cite{lan2020communication} for solving \eqref{eq:problem_decen} with $f_i$ being the logistic loss 
function. We show the required number of communication rounds and (stochastic) gradient evaluations for the algorithms in comparison to obtain the same target loss.
In all problem instances, we use $\|\cdot\|_2$ norm in both primal and dual spaces. 
All the experiments are programmed in Matlab 2020a and run on Clemson University's Palmetto high-performance computing clusters (with $6$ Intel Xeon Gold CLX 6248R CPUs for a total of 168 cores).

In the deterministic case, for the DCS algorithm we use the parameter setting as suggested in  \cite{lan2020communication} including using a dynamic inner iteration limit as $\min\{10k, 5,000\}$ for possible performance improvement. For the PDS algorithm we use the parameter setting as suggested in Theorem~\ref{thm:spp} for solving smooth and convex problems ($\mu=0$). In particular, when setting the inner iteration limit $T_k$ we choose $R=1/(2\sqrt{2})$ in \eqref{eq:par_beforeDelta}
.
Note that we also try tuning the estimation of Lipschitz constant $\tilde L$ for the best performance of the PDS algorithm.

\begin{table}[H]
\caption{Comparison of the DCS and PDS algorithms in terms of reaching the same target loss}
\label{tab:res1}
\begin{center}
\small
\begin{tabular}{c|c|c|c|c|c}
    Algorithm & Graph & Target Loss & Achieved $\|\mathcal{A}x\|$ & \begin{tabular}[c]{@{}l@{}}Com. \\ rounds\end{tabular} & \begin{tabular}[c]{@{}c@{}}Gradient \\ evaluations\end{tabular} \\
    \hline
DCS & G1($d_{max}=4$) & $70$  & $4.94e-04$   & $3,110$                      & $6,527,500$        \\
\hline
PDS  &  G1($d_{max}=4$)  & $70$ & $8.95e-02$         & $154$   & $24$   \\ \hline
DCS & G2($d_{max}=9$)     & $70$   &  $3.22e-14$ &  $5,624$ & $12,812,500$  \\ \hline
PDS  & G2($d_{max}=9$)    & $70 $ & $2.08e-01$         & $274$   & $25$    \\ \hline
DCS  & G3($d_{max}=20$) &$70$  &   NA\footnotemark[3]             & NA     & NA                                  \\ \hline
PDS & G3($d_{max}=20$) & $70$  & $1.84e-02$         & $468$   & $24$               \\ \hline \hline
DCS & G1($d_{max}=4$) & $60$  &   NA \footnotemark[3]            & NA     & NA                                  \\
\hline
PDS &  G1($d_{max}=4$)  & $60$ &  $3.60e-01$        & $236$   &  $60$  \\ \hline
DCS & G2($d_{max}=9$)    & $60$   &   NA \footnotemark[3]            & NA     & NA                               \\ \hline
PDS & G2($d_{max}=9$)    & $60 $ & $9.15e-01$  & $340$ &   $58$              \\ \hline
DCS  & G3($d_{max}=20$) &$60$  &   NA \footnotemark[3]            & NA     & NA                                  \\ \hline
PDS & G3($d_{max}=20$) & $60$   & $2.09e-01$  & $564$ & $54$  
\end{tabular}
\end{center}
\end{table}
\footnotetext{We use ``NA'' for DCS experiments running more than $8,000$ communication rounds but not achieving the target losses.}

Table~\ref{tab:res1} show the results we obtained from the experiments of solving deterministic logistic regression problems. From the table, to reach the same target loss our proposing PDS algorithm requires less rounds of communication and much less gradient evaluations than the DCS algorithm in \cite{lan2020communication}. 
In particular, the number of gradient evaluations barely changes as the maximum degree of the graph increases, which matches our theoretic results on graph topology invariant gradient complexity bounds. 
Moreover, we can see that both algorithms achieves reasonable feasibility residue, which is measured by $\|\mathcal{A}x\|_2$.
It needs to be pointed out that the DCS algorithm achieves a smaller feasibility residue but a much higher loss, while the PDS algorithm spends more efforts in reducing the loss value and maintaining an acceptable feasibility residue. 
Therefore, we can conclude that the PDS algorithm maintains a better trade-off between loss and feasibility residue than the DCS algorithm.

We also consider the stochastic problem in which the algorithms can only access the unbiased stochastic samples of the gradients. 
For the SDCS algorithm, we use the parameter setting as suggested in \cite{lan2020communication} (see Theorem 4 within) except using a dynamic inner iteration limit as $\min\{10k, 5,000\}$ for possible performance improvement. For the proposed SPDS algorithm, we use the parameter setting as suggested in Theorem~\ref{thm:spp_S} for solving smooth and convex stochastic problems with $R=1$ and $c=1/4$ in \eqref{eq:par_beforeDelta_S}. 
We set $N$ to be the smallest iteration limit where $N \mod 10 =0$, and hence $N=30$ for the cases when we set target loss as $70$ and $N=100$ when target loss is $60$.
\begin{table}[H]
\caption{Comparison of the SDCS and SPDS algorithms in terms of reaching the same target loss}
\label{tab:res2}
\begin{center}
\small
\begin{tabular}{c|c|c|c|c|c}
    Algorithm & Graph & Target Loss & Achieved $\|\mathcal{A}x\|$ & \begin{tabular}[c]{@{}l@{}}Com. \\ rounds\end{tabular} & \begin{tabular}[c]{@{}c@{}}Stochastic \\ grads.\end{tabular} \\
    \hline
SDCS & G1($d_{max}=4$) & $70$  &  NA\footnotemark[4]   &   NA                    &    NA    \\
\hline
SPDS  &  G1($d_{max}=4$)  & $70$ & $6.10e-04$         & $428$   & $431$ \\ \hline
SDCS & G2($d_{max}=9$)     & $70$   &  NA\footnotemark[4]   &   NA                    &    NA   \\ \hline
SPDS & G2($d_{max}=9$) & $70$  & $3.77e-04$         & $740$   & $431$     \\ \hline

SDCS  & G3($d_{max}=20$) &$70$  &   NA\footnotemark[4]             & NA     & NA                                  \\ \hline
SPDS  & G3($d_{max}=20$) & $70$  & $6.12e-05$         & $1,418$   & $431$               \\ \hline\hline
SDCS & G1($d_{max}=4$) & $60$  &   NA \footnotemark[4]            & NA     & NA                                  \\
\hline
SPDS &  G1($d_{max}=4$)& $60$  & $9.68e-04$         & $506$   & $1,434$   \\ \hline
SDCS & G2($d_{max}=9$)    & $60$   &   NA \footnotemark[4]            & NA     & NA                               \\ \hline
SPDS  & G2($d_{max}=9$)    & $60 $ & $8.87e-04$  & $816$ &   $1,433$              \\ \hline
SDCS  & G3($d_{max}=20$) &$60$  &   NA \footnotemark[4]            & NA     & NA                                  \\ \hline
SPDS & G3($d_{max}=20$) & $60$   & $1.18e-04$   & $1,496$ & $1,433$  
\end{tabular}
\end{center}
\end{table}
\footnotetext{We use ``NA'' for SDCS experiments running more than $4,000$ communication rounds but not achieving the target losses.}

Table~\ref{tab:res2} show the results we obtained from the experiments of solving logistic regression problems using stochastic samples of gradients. Observe from the table that the number of required stochastic gradient samples for the SPDS algorithm do not change as the maximum degree of the graph increases, which matches our theoretic result that the sampling complexity of the SPDS algorithm is graph topology invariant for stochastic problems.
The SPDS algorithm can achieve target losses within reasonable amount of CPU time and algorithm iterations, while the SDCS algorithm can not achieve any of the target loss within the required communication rounds. 
Therefore, we can conclude that the proposed SPDS algorithm outperforms the SDCS algorithm in \cite{lan2020communication} in terms of both sampling and communication complexities.

\section{Concluding remarks}
\label{sec:conclusion}

In this paper, we present a new class of algorithms for solving a class of decentralized multi-agent optimization problem. Our proposed primal dual sliding (PDS) algorithm is able to compute an approximate solution to the general convex smooth deterministic problem with $\cO\left(\sqrt{\tL/\varepsilon}+\|\cA\|/\varepsilon\right)$ communication rounds, which matches the current state-of-the-art. However, the number of gradient evaluations required by the PDS algorithm is improved to $\cO\left(\sqrt{\tL/\varepsilon}\right)$ and is invariant with respect to the graph topology. To the best of our knowledge, this is the only decentralized algorithm whose gradient complexity is graph topology invariant when solving decentralized problems over a constraint feasible set. We also propose a stochastic primal dual sliding (SPDS) algorithm that is able to compute an approximate solution to the general convex smooth stochastic problem with $\cO\left(\sqrt{\tL/\varepsilon} + \sigma^2/\varepsilon^2\right)$ sampling complexity, which is also the only algorithm in the literature that has graph topology invariant sampling complexity for solving problems of form \eqref{eq:problem_multi_agent}. Similar convergence results of the PDS and SPDS algorithms are also developed for strongly convex smooth problems. 


\section*{Acknowedgement}
The authors would like to thank Huan Li for helpful discussions on complexity bounds of unconstrained decentralized optimization problems.

\bibliographystyle{siamplain}
\bibliography{yuyuan}
\end{document}